\DeclareMathOperator{\Assh}{Assh}
\newcommand{\BC}[2]{\left(\!\!\!\begin{array}{c} #1\\ #2  \end{array}\!\!\right)}
\DeclareMathOperator{\cone}{cone}
\DeclareMathOperator{\Exp}{Exp}
\newcommand{\ii}[2]{I^{\lceil #1 #2\rceil}+J^{[#2]}}
\newcommand{\iii}[4]{{#3}^{\lceil #1 #2\rceil}+{#4}^{[#2]}}
\newcommand{\fthresh}[2]{c_{#2}\left({#1}\right)}
\DeclareMathOperator{\Hull}{Hull}
\newcommand{\length}[1]{\lambda\!\left(#1\right)}
\newcommand{\lengthR}[2]{\lambda_{#1}\!\left(#2\right)}
\newcommand{\m}{\mathfrak{m}}
\newcommand{\n}{\mathfrak{n}}
\newcommand{\N}{\mathbb{N}}
\newcommand{\nfact}[2]{\mathcal{H}_{#2}\!\left({#1}\right)}
\newcommand{\p}{\mathfrak{p}}
\newcommand{\q}{\mathfrak{q}}
\newcommand{\R}{\mathbb{R}}
\DeclareMathOperator{\rank}{rank}
\newcommand{\rmu}[3]{h_{#1}\!\left( #2,#3\right)}
\newcommand{\rmuM}[4]{h_{#1}\!\left( #2,#3;#4\right)}
\newcommand{\rmuR}[4]{h^{#1}_{#2}\!\left( #3,#4\right)}
\newcommand{\rmuu}[2]{h_{#1}\!\left({#2}\right)}
\newcommand{\rmuRM}[5]{h^{#1}_{#2}\!\left( #3,#4;#5\right)}
\newcommand{\rmuuM}[3]{h_{#1}\!\left({#2};{#3}\right)}
\newcommand{\scl}[2]{{#2}^{\mathrm{cl}_{#1}}}
\newcommand{\set}[2]{\left\{#1\;|\;#2\right\}}
\newcommand{\smu}[3]{e_{#1}\!\left({#2},{#3}\right)}
\newcommand{\smuM}[4]{e_{#1}\!\left({#2},{#3};{#4}\right)}
\newcommand{\smuR}[4]{e^{#1}_{#2}\!\left({#3},{#4}\right)}
\newcommand{\smuRM}[5]{e^{#1}_{#2}\!\left({#3},{#4};{#5}\right)}
\newcommand{\smuu}[2]{e_{#1}\!\left({#2}\right)}
\newcommand{\smuuM}[3]{e_{#1}\!\left({#2};{#3}\right)}
\newcommand{\smuuR}[3]{e^{#1}_{#2}\!\left({#3}\right)}
\DeclareMathOperator{\Spec}{Spec}
\newcommand{\standardassumptionsgreaterthan}{Let $(R,\m)$ be a local ring of characteristic $p>0$, let $I$ and $J$ be $\m$-primary ideals of $R$, let $M$ be a finitely generated $R$-module, and let $s> 0$.\ }
\newcommand{\standardassumptionsdimgreaterthan}{Let $(R,\m)$ be a local ring of dimension $d$ and characteristic $p>0$, let $I$ and $J$ be $\m$-primary ideals of $R$, let $M$ be a finitely generated $R$-module, and let $s> 0$.\ }
\newcommand{\standardassumptionsnos}{Let $(R,\m)$ be a local ring of characteristic $p>0$, let $I$ and $J$ be $\m$-primary ideals of $R$, and let $M$ be a finitely generated $R$-module.\ }
\newcommand{\standardassumptionsnosdim}{Let $(R,\m)$ be a local ring of dimension $d$ and characteristic $p>0$, let $I$ and $J$ be $\m$-primary ideals of $R$, and let $M$ be a finitely generated $R$-module.\ }
\newcommand{\standardassumptionsnosnom}{Let $(R,\m)$ be a local ring of characteristic $p>0$ and let $I$ and $J$ be $\m$-primary ideals of $R$.\ }
\newcommand{\sthresh}[2]{b_{#2}\left({#1}\right)}
\DeclareMathOperator{\vol}{vol}
\newcommand{\wsc}[2]{{#2}^{\mathrm{w.cl}_{#1}}}
\newcommand{\Z}{\mathbb{Z}}
\newtheorem{theorem}{Theorem}[section]
\newtheorem{corollary}[theorem]{Corollary}
\newtheorem{lemma}[theorem]{Lemma}
\newtheorem{proposition}[theorem]{Proposition}
\theoremstyle{definition}
\newtheorem{definition}[theorem]{Definition}
\newtheorem{remark}[theorem]{Remark}
\newtheorem{example}[theorem]{Example}
\begin{document}

\title{Interpolating Between Hilbert-Samuel and Hilbert-Kunz Multiplicity}
\author{William D. Taylor}
\address{Department of Mathematical Sciences \\ University of Arkansas \\ Fayetteville \\ AR 72701}
\email{wdtaylor@uark.edu}

\begin{abstract}We define a function, called $s$-multiplicity, that interpolates between Hilbert-Samuel multiplicity and Hilbert-Kunz multiplicity by comparing  powers of ideals to the Frobenius powers of ideals.  The function is continuous in $s$, and its value is equal to Hilbert-Samuel multiplicity for small values of $s$ and is equal to Hilbert-Kunz multiplicity for large values of $s$.  We prove that it has an Associativity Formula generalizing the Associativity Formulas for Hilbert-Samuel and Hilbert-Kunz multiplicity.  We also define a family of closures such that if two ideals have the same $s$-closure then they have the same $s$-multiplicity, and the converse holds under mild conditions.  We describe the $s$-multiplicity of monomial ideals in toric rings as a certain volume in real space.\end{abstract}

\maketitle

\section{Introduction}
The purpose of this paper is to investigate a function that interpolates continuously between Hilbert-Samuel multiplicity and Hilbert-Kunz multiplicity.  First we define a limit that behaves like a multiplicity, then we normalize it to get a proper interpolation between the Hilbert-Samuel and Hilbert-Kunz multiplicities.  This interpolating function, which we call $s$-multiplicity, is a single object which captures the behavior of both multiplicities as well as a family of multiplicity-like functions between them.  Many of the similarities between the two multiplicities, such as the existence of an Associativity Formula and the connection to a closure, can be interpreted as special cases of a more general statement about $s$-multiplicity.  

Throughout this paper, all rings will be assumed noetherian.  By $\lengthR{R}{M}$ we mean the length of $M$ as an $R$-module.  When the ring $R$ is understood we may write $\length{M}$ for $\lengthR{R}{M}$.

\begin{definition} Let $(R,\m)$ be a local ring of dimension $d$, $I\subseteq R$ an $\m$-primary ideal of $R$, and $M$ a finitely generated $R$-module.  The \emph{Hilbert-Samuel multiplicity of $M$ with respect to $I$} is defined to be
\[e(I;M)=\lim_{n\to\infty}\frac{d!\cdot \length{M/I^nM}}{n^d}.\]
We often write $e(I)$ for $e(I;R)$.
\end{definition}

Many properties of the Hilbert-Samuel multiplicity are well known.  For instance, if $I\subseteq J$ are ideals that have the same integral closure, then $e(I)=e(J)$, and if $R$ is formally equidimensional, then the converse holds \cite{Rees-aTransformsLocalRings}.  The Hilbert-Samuel multiplicity is always a positive integer, $e(\m)=1$ if $(R,\m)$ is regular, and if $R$ is formally equidimensional the converse holds \cite[Theorem 40.6]{Nagata-LocalRings}.

When $R$ is of prime characteristic $p>0$, the Frobenius map $F:R\to R$ taking $r\mapsto r^p$ is a ring homomorphism, and so we may treat $R$ as a module over itself via the action $r\cdot x=r^p x$.  In this case, we often denote the module $R$ with this new action by $F_\ast R$, and elements of this module by $F_\ast r$ for $r\in R$.  An $R$-module homomorphism $\varphi:F_\ast R\to R$ is called a $p^{-1}$-linear map, and has the property that for any $r,x\in R$, $r\varphi(F_\ast x)=\varphi(F_\ast (r^p x))$.  If $F_\ast R$ is finitely generated as an $R$-module, we say the ring  $R$ is $F$-finite.  For an ideal $I\subseteq R$ and $e\in \N$,  the $e$th Frobenius power of $I$, denoted $I^{[p^e]}$, is the ideal generated by the $p^e$th powers of the elements of $I$, equivalently by the $p^e$-th powers of a set of generators for $I$.  For any $p^{-1}$-linear map $\varphi$ and ideal $I\subseteq R$, $\varphi(F_\ast (I^{[p]}))\subseteq I$.

When $R$ is a ring of positive characteristic, we can define a limit similar to the Hilbert-Samuel multiplicity using the Frobenius powers of the ideal instead of the powers.

\begin{definition}  Let $(R,\m)$ be a local ring of dimension $d$, $I\subseteq R$ an $\m$-primary ideal of $R$, and $M$ a finitely generated $R$-module.  The \emph{Hilbert-Kunz multiplicity of $M$ with respect to $I$} is defined to be
\[e_{HK}(I;M)=\lim_{e\to\infty}\frac{\length{M/I^{[p^e]}M}}{p^{ed}}.\]
We often write $e_{HK}(I)$ for $e_{HK}(I;R)$.
\end{definition}

The Hilbert-Kunz multiplicity has some properties similar to the Hilbert-Samuel multiplicity.  In particular, if $I\subseteq J$ are ideals that have the same tight closure, then $e_{HK}(I)=e_{HK}(J)$, and if $R$ is complete and equidimensional then the converse holds \cite[Theorem 8.17]{HochsterHuneke-TightClosureInvariantTheory}.  The Hilbert-Kunz multiplicity is a real number at least 1, though unlike the Hilbert-Samuel multiplicity it need not be an integer.  However, like the Hilbert-Samuel multiplicity, $e_{HK}(\m)=1$ if $(R,\m)$ is regular, and if $R$ is unmixed then the converse holds \cite[Theorem 1.5]{WatanabeYoshida-HKMultiplicityInequality}.

A brief outline of the paper is as follows: In Section \ref{rmult section}, we prove the existence of the limit used to define the $s$-multiplicity and establish many of its fundamental properties that we will use throughout the paper.  Of particular note are the results that the $s$-multiplicity is continuous in the parameter $s$ and the Associativity Formula for $s$-multiplicity.  In Section \ref{smult section}, we examine the relationship between the $s$-multiplicity and the Hilbert-Samuel and Hilbert-Kunz multiplicity and compute the limit from Section \ref{rmult section} for regular rings, which allows us to finish the definition of the $s$-multiplicity. 
In Section \ref{closure operators section}, we define a collection of closures and prove that they have exactly the same relationship with $s$-multiplicity as integral closure (resp.\ tight closure) has with Hilbert-Samuel (resp.\ Hilbert-Kunz) multiplicity.  In Section \ref{smult toric rings section}, we describe a method for computing the $s$-multiplicity of pairs of ideals in toric rings and use it to compute the $s$-multiplicity of the $A_n$ singularities and rational normal curves.

\vspace{10pt}

\noindent \textbf{Acknowledgments.} The author would like to thank his Ph.D.\ advisor, Mark Johnson, for very many fruitful discussions and much excellent advice.  Additionally the author is indebted to Lance Edward Miller, Neil Epstein, and Paolo Mantero for profitable discussions.

\section{The Multiplicity-Like Function $\rmuM{s}{I}{J}{M}$}\label{rmult section}

We begin by considering a limit which combines aspects of the limits defining the Hilbert-Samuel and Hilbert-Kunz multiplicities.  The idea is to take the colengths of a sum of ideals, one of which corresponds to the increasing Frobenius powers of an ideal $J$, and one of which corresponds to a subsequence of the powers of another ideal $I$.  This subsequence will be determined by a real number $s$.  We require that both of these ideals be primary to the maximal ideal of the ring they belong to so that at the extreme values of the parameter $s$ one of the two ideals will dominate the other. This guarantees that in the extremal cases we will get a limit related to either the Hilbert-Samuel multiplicity of $I$ or the Hilbert-Kunz multiplicity of $J$.

\begin{theorem} \label{theorem -- main existence}\standardassumptionsdimgreaterthan    The limit
\[\lim_{e\to\infty}\frac{\length{M/(I^{\lceil s p^e\rceil} +J^{[p^e]})M}}{p^{ed}}\]
exists.
\end{theorem}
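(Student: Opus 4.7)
Set $n_e := \lceil s p^e \rceil$, $K_e := I^{n_e} + J^{[p^e]}$, and $\phi_e := \length{M/K_e M}/p^{ed}$. The plan is to show that $(\phi_e)$ is a Cauchy sequence in $\R$; since $\R$ is complete, this gives the desired limit. This reduces to establishing the Monsky-style estimate
\[\bigl|\length{M/K_{e+1} M} - p^d \length{M/K_e M}\bigr| \leq C \, p^{e(d-1)} \qquad (\ast)\]
for some constant $C$ independent of $e$, since $(\ast)$ immediately yields $|\phi_{e+1} - \phi_e| \leq C/p^{e+d}$, a summable bound.

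To prove $(\ast)$, I would insert the intermediate quantity $\length{M/K_e^{[p]} M}$ and bound the two resulting differences separately. The comparison of $K_{e+1}$ with $K_e^{[p]} = I^{p n_e} + J^{[p^{e+1}]}$ is elementary: the inequalities $s p^e \le n_e < s p^e + 1$ give $0 \le p n_e - n_{e+1} \le p-1$, so $K_e^{[p]} \subseteq K_{e+1}$ and the quotient $K_{e+1} M / K_e^{[p]} M$ is a subquotient of $I^{n_{e+1}} M / I^{p n_e} M$. The length of the latter is a difference of Hilbert--Samuel-like functions of $I$ acting on $M$ evaluated at integers differing by at most $p - 1$, hence grows as $O(p^{e(d-1)})$ by classical theory.

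The remaining piece, the estimate
\[\bigl|\length{M/K_e^{[p]} M} - p^d \length{M/K_e M}\bigr| \leq C' p^{e(d-1)},\]
is the main obstacle. This is a first-Frobenius-power version of Monsky's classical lemma for Hilbert--Kunz multiplicity, and I would follow his strategy: pass to the completion of $R$, then use additivity of lengths on a prime filtration of $M$ to reduce to the case $M = R/\p$ with $\dim R/\p = d$ (primes of lower dimension contribute only $O(p^{e(d-1)})$ terms). Replacing $R$ by $R/\p$, I may assume $R$ is a complete local domain, and Cohen's structure theorem then provides a module-finite extension $A \hookrightarrow R$ with $A$ regular. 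Over $A$, Frobenius is flat by Kunz's theorem, so the analogous identity holds exactly for $A$-modules; the extension $A \subseteq R$ contributes only lower-order corrections that are controlled by lengths of modules supported on strictly smaller-dimensional loci. The real difficulty will be ensuring uniformity of the constant $C'$ in $e$ as the ideals $K_e$ themselves vary; this requires tracking the error terms through each of the above reductions, using that the Hilbert--Samuel data of $I$ on $M$ provides a uniform polynomial bound for the extra terms that appear.
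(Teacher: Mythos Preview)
Your Cauchy-sequence strategy is genuinely different from the paper's, but there is a real gap. A small point first: $K_e^{[p]}=(I^{n_e})^{[p]}+J^{[p^{e+1}]}$, not $I^{pn_e}+J^{[p^{e+1}]}$; in general $(I^n)^{[p]}\subsetneq I^{np}$. So $K_{e+1}M/K_e^{[p]}M$ is bounded above by $\length{I^{n_{e+1}}M/(I^{n_e})^{[p]}M}$, which \emph{surjects onto} rather than embeds into $I^{n_{e+1}}M/I^{pn_e}M$. The repair is easy---use $I^{pn_e+\mu(I)(p-1)}\subseteq(I^{n_e})^{[p]}$ and the indices still differ by a constant---so the first piece survives. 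The genuine problem is your uniform Monsky bound $\bigl|\length{M/K_e^{[p]}M}-p^d\length{M/K_eM}\bigr|\le C'p^{e(d-1)}$. You propose to prove it via a regular Noether normalization $A\hookrightarrow R$ and flatness of Frobenius on $A$, but that flatness only gives $\length{N/\mathfrak b^{[p]}N}=p^d\length{N/\mathfrak bN}$ for $A$-ideals $\mathfrak b$; here $K_e$ is an $R$-ideal, and $K_eM$ is not $\mathfrak bM$ for any $A$-ideal, so ``the analogous identity holds exactly for $A$-modules'' is not the identity you need. A workable argument compares $F_\ast M$ with $M^{\oplus p^d}$ directly as $R$-modules (they are generically isomorphic when $R$ is an $F$-finite domain with perfect residue field), yielding $\bigl|\length{M/\mathfrak a^{[p]}M}-p^d\length{M/\mathfrak aM}\bigr|\le\length{T/\mathfrak aT}+\length{T'/\mathfrak aT'}$ for fixed torsion modules $T,T'$; since $K_e\supseteq J^{[p^e]}$ this is at most $\length{T/J^{[p^e]}T}+\length{T'/J^{[p^e]}T'}=O(p^{e(d-1)})$. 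But this needs $F$-finiteness, which is not among the hypotheses, and you have not supplied the reduction to that case.

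The paper avoids any uniform-in-the-ideal estimate by a two-scale sandwich. After enlarging the residue field it replaces $I$ by a $d$-generator reduction $K$, and shows that for each fixed power $q'$ and all large $q$,
\[(K^{\lceil sq'\rceil}+J^{[q']})^{[q]}\ \subseteq\ I^{\lceil sq'q\rceil}+J^{[q'q]}\ \subseteq\ (K^{\lceil sq'\rceil-d-1}+J^{[q']})^{[q]}.\]
The outer terms are Frobenius towers over \emph{fixed} $\m$-primary ideals, so Monsky's theorem for a single ideal applies to each; thus $\limsup-\liminf$ of the original sequence is trapped by the difference of the Hilbert-Kunz multiplicities of these two fixed ideals. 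Because $K$ has only $d$ generators, a direct monomial count bounds that gap by a quantity of order $1/q'$, and sending $q'\to\infty$ finishes.
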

To prove this we require a few results that will describe the generators of certain modules as $k$-vector spaces and establish some combinatorial facts which will allow us to effectively estimate the module lengths involved in the proof.

\begin{lemma} \label{lemma -- generator classification} Let $(R,\m,k)$ be a local ring containing its residue field, and let $M$ be an $R$-module of finite length. Let $\{x_1,\ldots, x_t\}$ be a set of generators for $\m$ and $\{m_1,\ldots, m_n\}$ a set of generators for $M$.   Then
\begin{enumerate}[(i)]
\item \label{generators of M} $M$ is generated as a $k$-vector space by elements of the form $x_1^{b_1}\cdots x_t^{b_t}m_j$, where $b_1,\ldots, b_t\in \N$ and $1\leq j\leq n$; and

\item \label{generators relative to I} If $I=(f_1,\ldots, f_m)$ is an $\m$-primary ideal of $R$ then $M$ is generated as a $k$-vector space by elements of the form 
$f_1^{a_1}\cdots f_m^{a_m}gm_j$, where $a_1,\ldots, a_m\in \N$, $1\leq j\leq n$, and $g$ is a generator of $R/I$ as a $k$-vector space. 
\end{enumerate}
\end{lemma}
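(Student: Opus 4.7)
The plan is to use the fact that $M$ has finite length, hence is annihilated by a sufficiently high power of $\m$, in order to reduce each statement to a finite combinatorial problem controlled by the residue-field hypothesis.

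For part (i), fix $N$ with $\m^N M=0$, so $M$ becomes a module over $R/\m^N$. Since the residue field $k$ embeds in $R$, I would first establish that $R/\m^N$ is spanned as a $k$-vector space by the images of the monomials $x_1^{b_1}\cdots x_t^{b_t}$ with $b_1+\cdots+b_t<N$. This follows from the filtration $R/\m^N\supseteq\m/\m^N\supseteq\cdots\supseteq\m^{N-1}/\m^N\supseteq 0$ together with the observation that each successive quotient $\m^n/\m^{n+1}$ is generated as an $R/\m=k$-module by the images of monomials of total degree $n$, since $\m^n$ is $R$-generated by such products. Having this, I would take arbitrary $m\in M$, write $m=\sum r_jm_j$, and expand each $r_j$ modulo $\m^N$ as a $k$-linear combination of these monomials; the remainder lies in $\m^N$ and therefore annihilates $m_j$, producing the desired expression.

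For part (ii), since $I$ is $\m$-primary and $I\subseteq\m$, we have $I^NM\subseteq\m^NM=0$. Let $V$ denote the $k$-subspace of $M$ spanned by the proposed generators $f_1^{a_1}\cdots f_m^{a_m}gm_j$. I would prove that $I^kM\subseteq V+I^{k+1}M$ for every $k\geq 0$ and iterate this $N$ times, concluding $M\subseteq V+I^NM=V$. For the inductive step, a typical generator of $I^kM$ has the form $f_1^{a_1}\cdots f_m^{a_m}rm_j$ with $a_1+\cdots+a_m=k$ and $r\in R$; writing $r=\sum_\alpha c_\alpha g_\alpha+\iota$ with $c_\alpha\in k$, $g_\alpha$ a chosen lift to $R$ of a generator of $R/I$ as a $k$-vector space, and $\iota\in I$, the product splits into a piece in $V$ and a piece in $I^{k+1}M$.

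The main obstacle, which is quite mild, is handling the lift of $g$ from $R/I$ to $R$ in part (ii): different lifts differ by elements of $I$, but those discrepancies are absorbed into generators with strictly larger total degree in the $f_i$ and are already accounted for by the induction. Beyond this, both parts are chiefly bookkeeping once the residue-field hypothesis is exploited to express everything $k$-linearly.
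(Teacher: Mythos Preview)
Your proof is correct and follows essentially the same strategy as the paper: exploit $R=k\oplus\m$ and the finite length of $M$ to terminate an iterative rewriting process. The only organizational difference is in part (ii), where you run a clean descending induction on the $I$-adic filtration directly (showing $I^kM\subseteq V+I^{k+1}M$), while the paper first invokes part (i) to obtain generators of the form $f_1^{a_1}\cdots f_m^{a_m}x_1^{b_1}\cdots x_t^{b_t}m_j$ and then rewrites each $x$-monomial modulo $I$; your route avoids the detour through the $x_i$'s but is not substantively different.
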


\begin{proof}
\eqref{generators of M} By definition, $M$ is generated as a $k$-vector space by elements of the form $rm_j$ with $r\in R$ and $1\leq j \leq n$.  
For each such $r$, we have that $r=v+\sum_{i=1}^t r_ix_i$ for some $v\in k$ and $r_i\in R$, since $R=k\oplus \m$ as a $k$-vector space.  
For each $i$, we may write $r_i=v_i+\sum_{j=1}^n r_{ij}x_j$ with $v_i\in k$ and $r_{ij}\in R$, and so 
\[r=v+\sum_{i=1}^t v_ix_i+\sum_{1\leq i,j\leq t} r_{i,j}x_ix_j.\]
We may repeat this process until every term either has a coefficient of the $x_i$'s which is an element of $k$ or has a degree in the $x_i$'s large enough that the term annihilates $M$ and so may be removed.

\eqref{generators relative to I}
By part \eqref{generators of M}, $M$ is generated as a $k$-vector space by terms of the form $f_1^{a_1}\cdots f_m^{a_m}x_1^{b_1}\cdots x_t^{b_t}m_j$ with $a_i,b_i\in\N$.   Fix a set of $k$-vector space generators $\{g_i\}$ of $R/I$.  Suppose that we have an element $\alpha = f_1^{a_1}\cdots f_m^{a_m}x_1^{b_1}\cdots x_t^{b_t}m_j\in M$ with $x_1^{b_1}\cdots x_t^{b_t}\notin \{g_i\}$.  There exists $i$ such that $x_1^{b_1}\cdots x_t^{b_t}-g_i\in I$, and so there exist $r_1,\cdots,r_m\in R$ such that $x_1^{b_1}\cdots x_t^{b_t}-g_i = \sum_{\ell=1}^m r_\ell f_\ell.$
Therefore,
\[\alpha=f_1^{a_1}\cdots f_m^{a_m}g_im_j+\sum_{\ell=1}^m f_1^{a_1}\cdots f_\ell^{a_\ell+1}\cdots f_m^{a_m}r_\ell m_j.\]
We know by part (\ref{generators of M}) that $r_\ell m_j$ is a $k$-linear combination of terms of the form $x_1^{b_1'}\cdots x_t^{b_t'}m_{j'}$, and so we have that $\alpha$ is a $k$-linear combination of terms of the form $f_1^{a_1}\cdots f_m^{a_m}g_im_j$ and $f_1^{a_1'}\cdots f_m^{a_m'}x_1^{b_1'}\cdots x_t^{b_t'}m_{j'}$
with $\sum_\ell a_\ell'=1+\sum_\ell a_\ell$.
Continuing in this way, we may write $\alpha$ as a $k$-linear combination of terms either of the form $f_1^{a_1}\cdots f_m^{a_m}g_im_j$ for some $i$ or of the form $f_1^{a_1}\cdots f_m^{a_m}x_1^{b_1}\cdots x_t^{b_t}m_j$ with $\sum_i a_i$ arbitrarily large.  Since $I^n$ annihilates $M$ for some $n$, we may throw out all the terms of the second kind, which finishes the proof.
\end{proof}

Bounding the lengths of the ideals we are concerned with will involve some combinatorial calculations.  For convenience we introduce some notation.  
 For positive integers $d$ and $m$ and real number $r$, we set $S_d^m(r)$ to be the number of monomials in $d$ variables with degree less than $r$ and with degree in each variable less than $m$.

Certain properties of the numbers $S_d^m(r)$ are easy to see.  First, if $r\geq 0$, then $S_1^m(r)=\min\{m,\lceil r\rceil\}$.  Second, for $d>1$, we have that  
$S_d^m(r)=\sum_{i=0}^{m-1} S_{d-1}^m(r-i)$.  Indeed, if we denote one of the variables by $x$, then for $i=0,1,\ldots, m-1$, there are $S_{d-1}^m(r-i)$ monomials with degree exactly $i$ in $x$, degree less than $r$, and with degree in each variable less than $m$. 

We will occasionally use a combinatorial description of the numbers $S_m^d(r)$, which is established in the following lemma.  This result appeared in a more general form as \cite[Lemma 2.5]{RobinsonSwanson-ExplicitHKFunctionsDeterminantalRings},  though the method of proof was different.

\begin{lemma} \label{S_d^m(r) combinatorial} For positive integers $d$ and $m$ and real number $r$,
\[S_d^m(r)=\sum_{i=0}^d (-1)^i\BC{d}{i}\BC{\lceil r\rceil-im -1+d}{d}.\]
\end{lemma}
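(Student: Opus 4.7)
The plan is to prove the identity by inclusion--exclusion on the upper-bound constraints $b_j < m$. Observe first that without these per-variable constraints, the number of tuples $(b_1,\ldots,b_d) \in \N^d$ satisfying $b_1 + \cdots + b_d < r$ equals $\binom{\lceil r\rceil - 1 + d}{d}$, the standard stars-and-bars count for the integer inequality $b_1 + \cdots + b_d \leq \lceil r\rceil - 1$.

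For each $j \in \{1,\ldots,d\}$, let $A_j$ denote the subset of such tuples that also satisfy $b_j \geq m$; then $S_d^m(r)$ is precisely the number of tuples of total degree less than $r$ that lie in none of the $A_j$. For any $S \subseteq \{1,\ldots,d\}$ of size $i$, the substitution $b_j = c_j + m$ for $j \in S$ gives a bijection between $\bigcap_{j \in S} A_j$ and the set of $\N^d$-tuples of total degree less than $r - im$, which has cardinality $\binom{\lceil r\rceil - im - 1 + d}{d}$ (using $\lceil r - im\rceil = \lceil r\rceil - im$, since $im \in \Z$). Because this count depends only on $|S| = i$, the inclusion--exclusion formula collapses to
\[S_d^m(r) = \sum_{i=0}^d (-1)^i \binom{d}{i} \binom{\lceil r \rceil - im - 1 + d}{d},\]
as claimed.

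The one small point to be careful about is the convention when $\lceil r\rceil - im - 1 + d < d$: here the corresponding intersection is combinatorially empty (once $im > \lceil r \rceil - 1$, no non-negative integer tuple can have total degree both at least $im$ and at most $\lceil r \rceil - 1$), so the binomial symbol must be read as $0$ in that regime. With the combinatorial convention $\binom{N}{d} = 0$ for every integer $N < d$, the formula holds verbatim; otherwise one equivalently truncates the sum at the largest $i$ with $im \leq \lceil r \rceil - 1$. Beyond this bookkeeping, I expect no real obstacle, as the inclusion--exclusion step itself is entirely standard.
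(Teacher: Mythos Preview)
Your proof is correct and follows the same inclusion--exclusion argument as the paper: count monomials of degree less than $r$ subject to the bad events ``$b_j\ge m$,'' observe that fixing $i$ such variables reduces to counting monomials of degree less than $r-im$, and sum with alternating signs. Your write-up is in fact more explicit than the paper's about the bijection $b_j\mapsto b_j-m$, the identity $\lceil r-im\rceil=\lceil r\rceil-im$, and the binomial-coefficient convention for the empty terms.
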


\begin{proof}
The number of monomials in $d$ variables, of degree less than $r$, where each of a given set of $i$ variables has degree at least $m$ is the number of monomials in $d$ variables of degree less than $r-im$, that is, $\BC{\lceil r\rceil-im -1+d}{d}$.  Thus the total number of monomials in $d$ variables of degree less than $r$ with degree in each variable less than $m$ is
\[\BC{\lceil r\rceil -1+d}{d}-\sum_{i=1}^d (-1)^{i-1} \BC{\lceil r\rceil -im-1-d}{d},\]
by the inclusion-exclusion principle.
\end{proof}

Our next lemma is a technical result on the behavior of the numbers $S_d^m(r)$ as $m$ and $r$ grow.

\begin{lemma} \label{lemma -- properties of S_d^m(r)} If $f,g:\N\to \R$ are functions such that $f(n)-g(n) \leq cn+o(n)$ for some $c\in\R$, $f(n)\geq g(n)$ for $n\gg0$, and $u$ is a positive integer, then
\[\limsup_{n\to\infty}\frac{S_d^{un}(f(n))-S_d^{un}(g(n))}{n^d}\leq u^{d-1}c.\]
\end{lemma}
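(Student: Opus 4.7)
The plan is to reduce the statement to the clean combinatorial inequality
\[
S_d^m(r) - S_d^m(r') \;\leq\; \bigl(\lceil r\rceil - \lceil r'\rceil\bigr)\, m^{d-1}
\]
valid for any positive integers $d,m$ and reals $r\geq r'\geq 0$, and then to substitute $m = un$, $r = f(n)$, $r' = g(n)$ and take the $\limsup$.

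To prove the inequality, I would argue directly from the definition of $S_d^m(\cdot)$: the difference $S_d^m(r) - S_d^m(r')$ counts those tuples $(a_1,\dots,a_d)\in\N^d$ with $a_i \leq m-1$ for all $i$ and $\lceil r'\rceil \leq a_1+\cdots+a_d \leq \lceil r\rceil - 1$ (passing to ceilings is legitimate since the sum is an integer, and $\sum a_i < r$ iff $\sum a_i \leq \lceil r\rceil - 1$). For each fixed integer value $k$ of the sum $a_1+\cdots+a_d$, the tuple is determined by its first $d-1$ coordinates, each of which takes one of $m$ values, giving at most $m^{d-1}$ tuples. Since $k$ ranges over an integer interval of length $\lceil r\rceil - \lceil r'\rceil$, the bound follows.

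To finish, choose $n$ large enough that $f(n)\geq g(n)$. Substituting into the inequality above,
\[
\frac{S_d^{un}(f(n)) - S_d^{un}(g(n))}{n^d} \;\leq\; \frac{(\lceil f(n)\rceil - \lceil g(n)\rceil)(un)^{d-1}}{n^d} \;=\; u^{d-1}\cdot\frac{\lceil f(n)\rceil - \lceil g(n)\rceil}{n},
\]
and bounding $\lceil f(n)\rceil - \lceil g(n)\rceil \leq f(n) - g(n) + 1 \leq cn + o(n)$ gives the desired $\limsup$ bound of $u^{d-1}c$.

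There is no real obstacle here: the inclusion--exclusion formula of the previous lemma is not needed, and the only subtlety is the bookkeeping with $\lceil\cdot\rceil$ when converting the strict inequality in the definition of $S_d^m$ into a weak inequality between integers. The key conceptual step is the lattice-point observation that fixing the first $d-1$ coordinates of a tuple with prescribed sum determines the last, bounding each "slice by total degree" by $m^{d-1}$.
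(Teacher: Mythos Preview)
Your proof is correct and takes a genuinely different route from the paper's. The paper proceeds by induction on $d$, using the recursion $S_d^m(r)=\sum_{i=0}^{m-1}S_{d-1}^m(r-i)$: it bounds the difference $S_d^{un}(f(n))-S_d^{un}(g(n))$ by $un$ times the largest summand $S_{d-1}^{un}(f(n)-i_n)-S_{d-1}^{un}(g(n)-i_n)$, then applies the inductive hypothesis to that summand. Your argument instead establishes the non-asymptotic inequality $S_d^m(r)-S_d^m(r')\leq (\lceil r\rceil-\lceil r'\rceil)\,m^{d-1}$ in one stroke by slicing by total degree, which avoids induction altogether and sidesteps the slightly awkward bookkeeping with the maximizing index $i_n$ in the paper's proof. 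Your approach is cleaner and yields a sharper pointwise bound; the paper's recursive argument has the mild advantage that it reuses the structural identity for $S_d^m$ already set up, but at the cost of a longer and more delicate argument. One tiny remark: your stated hypothesis $r'\geq 0$ is not actually needed for the slicing bound (since $S_d^m(r')=0$ when $r'\leq 0$ and the count of admissible values of $k$ is still at most $\lceil r\rceil-\lceil r'\rceil$), so the argument covers the lemma in full generality without any extra assumption on $g$.
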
                                   

\begin{proof} We proceed by induction on $d$.  Suppose $d=1$, and let $n\in \N$ large enough that $f(n)\geq g(n)$.  If $un\leq g(n)$ we have that
\[S_1^{un}(f(n))-S_1^{un}(g(n))=0\]
and if $un>g(n)$ then
\[S_1^{un}(f(n))-S_1^{un}(g(n))\leq \lceil f(n)\rceil -\lceil g(n)\rceil\leq f(n)-g(n)+1.\]
Therefore
\[\limsup_{n\to\infty}\frac{S_1^{un}(f(n))-S_1^{un}(g(n))}{n}\leq \limsup_{n\to\infty}\frac{f(n)-g(n)+1}{n}\leq c.\]
Now suppose that $d>1$.  Then
\begin{align*}
S_d^{un}(f(n))-S_d^{un}(g(n))&=\sum_{i=0}^{un-1}\left(S_{d-1}^{un}(f(n)-i)-S_{d-1}^{un}(g(n)-i)\right)\leq un\left(S_{d-1}^{un}(f(n)-i_n)-S^{un}_{d-1}(g(n)-i_n)\right)
\end{align*}
where $i_n$ is the value of $i$ with $1\leq i\leq un-1$ that maximizes the expression $S_{d-1}^{un}(f(n)-i)-S_{d-1}^{un}(g(n)-i)$.  By induction,
\begin{align*}
\limsup_{n\to\infty}\frac{S_d^{un}(f(n))-S_d^{un}(g(n))}{n^d}
\leq & \limsup_{n\to\infty}\frac{un\left(S_{d-1}^{un}(f(n)-i_n)-S^{un}_{d-1}(g(n)-i_n)\right)}{n^d}\\
=&u\cdot \limsup_{n\to\infty}\frac{S_{d-1}^{un}(f(n)-i_n)-S^{un}_{d-1}(g(n)-i_n)}{n^{d-1}}\\
\leq &u\cdot u^{d-2}c=u^{d-1}c.\qedhere
\end{align*}
\end{proof}
 
\begin{proof}[Proof of Theorem \ref{theorem -- main existence}]
If $d=0$, then for large enough $e$, $\ii{s}{p^e}=0$ and so the limit is simply $\length{R}$.  Suppose that $d\geq 1$. 
If $k$ is not infinite, we may replace $R$ by $S=R[X]_{\m R[X]}$.  For any $R$-module $N$, we have $\lengthR{R}{N}=\lengthR{S}{N\otimes_R S}$, and so we may assume without loss of generality that the ring $R$ has infinite residue field.
Let $K$ be a reduction of $I$ generated by $d$ elements $f_1,\ldots, f_d\in R$, and let $w$ be the reduction number of $I$ with respect to $K$.
Let $x_1,\ldots, x_t\in R$ be a set of generators for the maximal ideal $\m$.
Let $m_1,\ldots,m_n\in M$ be a set of generators of $M$.
Let $v\in \N$ such that $K^v\subseteq J$.
Let $q,q'$ be varying powers of $p$.

If $q'>\frac{w+d}{s}$, then for sufficiently large $q$ we have that
\[\left(K^{\lceil sq'\rceil}+J^{[q']}\right)^{[q]}\subseteq \left(I^{\lceil sq'\rceil }+J^{[q']}\right)^{[q]}\subseteq I^{\lceil sq'q\rceil}+J^{[q'q]}
\subseteq K^{\lceil sq'q\rceil - w}+J^{[q'q]}\subseteq \left(K^{\lceil sq'\rceil-d-1}+J^{[q']}\right)^{[q]}.\]
Therefore,
\begin{align*}
\length{\frac{M}{\left(K^{\lceil sq'\rceil-d-1}+J^{[q']}\right)^{[q]}M}} &\leq \length{\frac{M}{\left(I^{\lceil sq'q\rceil}+J^{[q'q]}\right)M}} \leq \length{\frac{M}{\left(K^{\lceil sq'\rceil}+J^{[q']}\right)^{[q]}M}}.\end{align*}
If we divide the first and last terms of this inequality by $q^d$, then the limit as $q\to\infty$ exists by \cite[Theorem 1.8]{Monsky-HilbertKunzFunction}.  Hence
\begin{align*}
&\limsup_{q\to\infty} \frac{1}{q^d} \length{\frac{M}{\left(I^{\lceil sq'q\rceil}+J^{[q'q]}\right)M}}- \liminf_{q\to\infty}\frac{1}{q^d} \length{\frac{M}{\left(I^{\lceil sq'q\rceil}+J^{[q'q]}\right)M}}\\
\leq &\lim_{q\to\infty}\frac{1}{q^d}\left(\length{\frac{M}{\left(K^{\lceil sq'\rceil}+J^{[q']}\right)^{[q]}M}}-\length{\frac{M}{\left(K^{\lceil sq'\rceil-d-1}+J^{[q']}\right)^{[q]}M}}\right)\\
=&\lim_{q\to\infty}\frac{1}{q^d}\length{\frac{\left(K^{\lceil sq'\rceil-d-1}+J^{[q']}\right)^{[q]}M}{\left(K^{\lceil sq'\rceil}+J^{[q']}\right)^{[q]}M}}.
\end{align*}
Let 
\[Q=\frac{\left(K^{\lceil sq'\rceil-d-1}+J^{[q']}\right)^{[q]}M}{\left(K^{\lceil sq'\rceil}+J^{[q']}\right)^{[q]}M}\cong \frac{\left(K^{[q]}\right)^{\lceil sq'\rceil-d-1}M}{\left(\left(K^{[q]}\right)^{\lceil sq'\rceil}+J^{[q'q]}\right)M\cap \left(K^{[q]}\right)^{\lceil sq'\rceil-d-1} M}.\]
As an $R$-module, $Q$ is generated by elements of the form $f_1^{y_1q}\cdots f_d^{y_dq}m_\alpha$,
where $\sum_i y_i = \lceil sq'\rceil -d-1$ and $1\leq \alpha\leq n$.  Therefore, by Lemma \ref{lemma -- generator classification}, $Q$ can be generated as a $k$-vector space by elements of the form 
$f_1^{y_1q+z_1}\cdots f_d^{y_dq+z_d}gm_\alpha$
where $b_i,y_i,z_i\in \N$, $\sum_i y_i = \lceil sq'\rceil -d-1$, and $g$ is a $k$-vector space generator of $R/K$.  Letting $c_i=y_i+\lfloor z_i/q\rfloor$ and $a_i=z_i-q\lfloor z_i/q\rfloor$, we have that $c_iq+a_i=y_iq+z_i$ and $a_i<q$, and so $Q$ can be generated as a $k$-vector space by elements of the form
$f_1^{c_1q+a_1}\cdots f_d^{c_dq+a_d}gm_\alpha$
where $a_i,b_i,c_i\in \N$, $a_i<q$, $\sum_i c_i\geq \lceil sq'\rceil -d-1$, $g$ is a $k$-vector space generator of $R/K$, and $1\leq \alpha\leq n$.  However, if $\sum_i c_i\geq sq'$ or $c_i\geq vq'$ for some $i$, then the product above vanishes in $Q$.  Therefore
\[\length{Q}\leq q^d\cdot \left(S_d^{vq'}(sq')-S_d^{vq'}(sq'-d-1)\right)\cdot \length{R/K}\cdot n.\]
From this we have that
\begin{align*}
&\limsup_{q\to\infty} \frac{1}{q^d} \length{\frac{M}{\left(I^{\lceil sq\rceil}+J^{[q]}\right)M}}- \liminf_{q\to\infty}\frac{1}{q^d} \length{\frac{M}{\left(I^{\lceil sq\rceil}+J^{[q]}\right)M}}\\
=&\limsup_{q\to\infty} \frac{1}{(q'q)^d} \length{\frac{M}{\left(I^{\lceil sq'q\rceil}+J^{[q'q]}\right)M}}- \liminf_{q\to\infty}\frac{1}{(q'q)^d} \length{\frac{M}{\left(I^{\lceil sq'q\rceil}+J^{[q'q]}\right)M}}\\
\leq & \lim_{q\to\infty}\frac{q^d\cdot \left(S_d^{vq'}(sq')-S_d^{vq'}(sq'-d-1)\right)\cdot \length{R/K}\cdot n}{(q'q)^d}\\
=&\frac{\left(S_d^{vq'}(sq')-S_d^{vq'}(sq'-d-1)\right)\cdot \length{R/K}\cdot n}{(q')^d}.\end{align*}
Since this holds for all $q'\gg 0$, and by Lemma \ref{lemma -- properties of S_d^m(r)}, 
\begin{align*}
&\limsup_{q\to\infty} \frac{1}{q^d} \length{\frac{M}{\left(I^{\lceil sq\rceil}+J^{[q]}\right)M}}- \liminf_{q\to\infty}\frac{1}{q^d} \length{\frac{M}{\left(I^{\lceil sq\rceil}+J^{[q]}\right)M}}\\
\leq&\limsup_{q'\to\infty}\frac{\left(S_d^{vq'}(sq')-S_d^{vq'}(sq'-d-1)\right)\cdot \length{R/K}\cdot n}{(q')^d}
\leq 0.
\end{align*}
Thus the limit exists and the theorem is proved.
\end{proof}

\begin{definition} \standardassumptionsnosdim  For $s>0$, we set
\[\rmuM{s}{I}{J}{M} = \lim_{e\to\infty}\frac{\length{M/(I^{\lceil sp^e\rceil} +J^{[p^e]})M}}{p^{ed}}.\]
We will often write $\rmu{s}{I}{J}$ for $\rmuM{s}{I}{J}{R}$, $\rmuuM{s}{I}{M}$ for $\rmuM{s}{I}{I}{M}$, $\rmuu{s}{I}$ for $\rmuuM{s}{I}{R}$, and $\rmuu{s}{M}$ for $\rmuuM{s}{\m}{M}$.  If we wish to emphasize the ring $R$, we will write $\rmuRM{R}{s}{I}{J}{M}$ or a similarly decorated variant.\end{definition}

We next establish some properties of $\rmuM{s}{I}{J}{M}$.  We will use the next result repeatedly throughout the paper, often without explicit reference.

\begin{proposition} \label{rmult basics} \standardassumptionsnosdim The following statements hold:
\begin{enumerate}[(i)]
\item \label{rmult HS compare} \label{rmult HK compare} $\rmuM{s}{I}{J}{M}\leq \min\{\frac{s^d}{d!}e(I;M),e_{HK}(J;M)\}$.
\item \label{rmult low dim} If $\dim M<d$ then $\rmuM{s}{I}{J}{M}=0$.
\item \label{rmult s increasing} If $s'\geq s$ then $\rmuM{s'}{I}{J}{M}\geq \rmuM{s}{I}{J}{M}$.
\item \label{rmult ideals decreasing} If $I'$ and $J'$ are ideals of $R$ such that $I\subseteq I'$ and $J\subseteq J'$, then $\rmuM{s}{I'}{J'}{M}\leq \rmuM{s}{I}{J}{M}$. 
\item \label{rmult integral closure} If $I'$ is an ideal of $R$ with the same integral closure as $I$, then $\rmuM{s}{I'}{J}{M}=\rmuM{s}{I}{J}{M}$.
\item \label{rmult tight closure} If $J'$ is an ideal of $R$ with the same tight closure as $J$, then $\rmuM{s}{I}{J'}{M}=\rmuM{s}{I}{J}{M}$.
\end{enumerate}
\end{proposition}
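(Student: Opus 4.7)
The plan is to handle the six items roughly in order, using later ones as needed. Items (iii) and (iv) follow from elementary ideal containments: $s'\geq s$ gives $I^{\lceil s'p^e\rceil}\subseteq I^{\lceil sp^e\rceil}$ (shrinking the ideal and raising the quotient's length), while $I\subseteq I'$ and $J\subseteq J'$ give $I^{\lceil sp^e\rceil}+J^{[p^e]}\subseteq (I')^{\lceil sp^e\rceil}+(J')^{[p^e]}$ (enlarging the ideal and lowering the quotient's length). For (i), the containments $I^{\lceil sp^e\rceil}+J^{[p^e]}\supseteq I^{\lceil sp^e\rceil}$ and $\supseteq J^{[p^e]}$ yield $\length{M/(I^{\lceil sp^e\rceil}+J^{[p^e]})M}\leq \length{M/I^{\lceil sp^e\rceil}M}$ and $\leq \length{M/J^{[p^e]}M}$; dividing by $p^{ed}$ and using $\lceil sp^e\rceil/p^e\to s$ together with the definitions of $e(I;M)$ and $e_{HK}(J;M)$ produces the two upper bounds. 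Part (ii) then follows from (i), since $\dim M<d$ forces $e(I;M)=0$.

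For (v) and (vi), the first step is to reduce to the nested case by replacing $I'$ with $I+I'$ (respectively $J'$ with $J+J'$), which shares integral closure $\bar I$ (resp.\ tight closure $J^*$); part (iv) then implies that it suffices to prove equality under the hypothesis $I\subseteq I'$ (resp.\ $J\subseteq J'$).

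For (v), a standard integral-closure fact produces a constant $c\in\N$ with $(I')^n\subseteq I^{n-c}$ for all $n\geq c$ (obtained, after passing to infinite residue field, by taking a minimal reduction $K$ of $I'$, which is automatically a reduction of $I$, and iterating $(I')^{n+1}=K(I')^n$ together with $I^{n+1}=KI^n$). Consequently,
\[I^{\lceil sp^e\rceil}+J^{[p^e]}\subseteq (I')^{\lceil sp^e\rceil}+J^{[p^e]}\subseteq I^{\lceil sp^e\rceil-c}+J^{[p^e]}\]
for large $e$, and the colength difference between the outer terms is bounded by $\length{I^{\lceil sp^e\rceil-c}M/I^{\lceil sp^e\rceil}M}=O(p^{e(d-1)})$ via the Hilbert--Samuel polynomial of $M$ with respect to $I$. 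Dividing by $p^{ed}$ and squeezing gives the equality.

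For (vi), extracting a tight-closure test element for each generator of $J'$ and multiplying yields $c\in R^\circ$ with $c(J')^{[p^e]}\subseteq J^{[p^e]}$ for all $e\gg 0$. Set $N_e=\bigl(I^{\lceil sp^e\rceil}+(J')^{[p^e]}\bigr)M/\bigl(I^{\lceil sp^e\rceil}+J^{[p^e]}\bigr)M$; the inclusion gives $cN_e=0$, so $N_e$ embeds into the $c$-torsion of $M_e:=M/(I^{\lceil sp^e\rceil}+J^{[p^e]})M$. Since $\length{0:_{M_e}c}=\length{M_e/cM_e}$ for finite-length modules, we get
\[\length{N_e}\leq \length{(M/cM)/(I^{\lceil sp^e\rceil}+J^{[p^e]})(M/cM)}.\]
When $\dim M=d$, every top-dimensional associated prime of $M$ is a minimal prime of $R$, so $c\in R^\circ$ forces $\dim(M/cM)<d$, and (ii) applied to $M/cM$ makes the right side $o(p^{ed})$; the case $\dim M<d$ is handled by (ii) directly. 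Combined with (iv), this proves (vi). The main obstacle is this step, which relies on two standard but nontrivial tight-closure inputs: the uniform test element $c$ absorbing $(J')^{[p^e]}$ into $J^{[p^e]}$ for all $e$, and the dimension drop $\dim(M/cM)<d$.
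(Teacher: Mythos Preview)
Your proof is correct. Parts (i)--(iv) match the paper essentially verbatim, with the cosmetic difference that for (ii) you invoke $e(I;M)=0$ when $\dim M<d$ while the paper invokes $e_{HK}(J;M)=0$; either works.

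For (v) and (vi) you take a genuinely different route. The paper reduces to $I'=\overline{I}$ (resp.\ $J'=J^*$), bounds the difference $\rmuM{s}{I}{J}{M}-\rmuM{s}{I'}{J}{M}$ by $\lim_{e}p^{-ed}\length{(\overline{I})^{\lceil sp^e\rceil}M/I^{\lceil sp^e\rceil}M}$ (resp.\ by $\lim_{e}p^{-ed}\length{(J^*)^{[p^e]}M/J^{[p^e]}M}$), and then cites the classical equalities $e(I)=e(\overline I)$ \cite[Prop.~11.2.1]{HunekeSwanson-IntegralClosureIdealsRingsModules} and $e_{HK}(J)=e_{HK}(J^*)$ \cite[Thm.~8.17]{HochsterHuneke-TightClosureInvariantTheory} to conclude these bounds vanish. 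Your approach instead squeezes directly: for (v) you use the reduction inclusion $(I')^n\subseteq I^{n-c}$ together with the $O(n^{d-1})$ growth of $\length{I^{n-c}M/I^nM}$; for (vi) you absorb $(J')^{[p^e]}$ into $J^{[p^e]}$ via a single $c\in R^\circ$, bound the discrepancy by $\rmuM{s}{I}{J}{M/cM}$, and appeal to part (ii) for the lower-dimensional module $M/cM$. Your arguments are more self-contained, avoiding the Rees and Hochster--Huneke multiplicity theorems as black boxes, at the cost of essentially reproving fragments of them inside the proof.

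One small correction in (v): your parenthetical derivation of $(I')^n\subseteq I^{n-c}$ is garbled, since a minimal reduction $K$ of $I'$ need not lie inside $I$, so it is not ``automatically a reduction of $I$'' in the usual sense. The clean route is simply to note that $I\subseteq I'\subseteq\overline{I}$ makes $I$ itself a reduction of $I'$; hence $(I')^{r+1}=I(I')^r$ for some $r$, and so $(I')^{r+n}=I^n(I')^r\subseteq I^n$ for all $n\ge 0$, giving the desired inclusion with $c=r$.
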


\begin{proof}

(\ref{rmult HS compare}) For all $e\in \N$ we have that $\ii{s}{p^e}\supseteq I^{\lceil sp^e\rceil}$, hence 
\[\lim_{e\to\infty}\frac{\length{M/(\ii{s}{p^e})M}}{p^{ed}}\leq\lim_{e\to\infty}\frac{\length{M/I^{\lceil sp^e\rceil }M}}{\lceil sp^e\rceil^{d}}\cdot\frac{\lceil sp^e\rceil^{d}}{p^{ed}}=\frac{s^d}{d!}e(I;M).\]
Furthermore, for all $e\in \N$ we have that $\ii{s}{p^e}\supseteq J^{[p^e]}$, hence 
\[\lim_{e\to \infty} \frac{\length{M/(\ii{s}{p^e})M}}{p^{ed}}\leq \lim_{e\to\infty}\frac{\length{M/J^{[p^e]}M}}{p^{ed}}=e_{HK}(J;M).\] 

(\ref{rmult low dim}) By \cite[Lemma 1.2]{Monsky-HilbertKunzFunction}, $e_{HK}(J;M)=0$ for any $M$ with $\dim M<d$, and so part (\ref{rmult HK compare}) gives us the result.

(\ref{rmult s increasing}) For all $e\in \N$ we have that $\ii{s}{p^e}\supseteq \ii{s'}{p^e}$, hence \[\length{M/(\ii{s}{p^e})M}\leq \length{M/(\ii{s'}{p^e})M}.\]

(\ref{rmult ideals decreasing}) For all $e\in \N$ we have that $\iii{s}{p^e}{I'}{J'}\supseteq \ii{s}{p^e}$, hence \[\length{M/(\iii{s}{p^e}{I'}{J'})M}\leq \length{M/(\ii{s}{p^e})M}.\] 

(\ref{rmult integral closure}) It suffices to prove the case where $I'=\overline{I}$, the integral closure of $I$.  If $s>0$, then we have that, by part (\ref{rmult ideals decreasing}) and \cite[Proposition 11.2.1]{HunekeSwanson-IntegralClosureIdealsRingsModules},
\begin{align*}
0&\leq \rmuM{s}{I}{J}{M}-\rmuM{s}{\overline{I}}{J}{M}
=\lim_{e\to\infty}\frac{1}{p^{ed}}\length{\frac{\iii{s}{p^e}{\overline{I}}{J}}{\ii{s}{p^e}}}\leq \lim_{e\to\infty}\frac{1}{p^{ed}}\length{\frac{(\overline{I})^{\lceil sp^e\rceil}}{I^{\lceil sp^e\rceil}}}
=\frac{s^d}{d!}\left(e(I)-e(\overline I)\right)=0.
\end{align*}

(\ref{rmult tight closure}) It suffices to prove the case where $J=J^\ast$, the tight closure of $J$.  We have that,  by part (\ref{rmult ideals decreasing}) and \cite[Theorem 8.17]{HochsterHuneke-TightClosureInvariantTheory},
\begin{align*}
0&\leq \rmuM{s}{I}{J}{M}-\rmuM{s}{I}{J^\ast}{M}\\&
=\lim_{e\to\infty}\frac{1}{p^{ed}}\length{\frac{\iii{s}{p^e}{I}{(J^\ast)}}{\ii{s}{p^e}}}\leq \lim_{e\to\infty}\frac{1}{p^{ed}}\length{\frac{(J^\ast)^{[p^e]}}{J^{[p^e]}}}
=e_{HK}(J)-e_{HK}(J^\ast)=0.\qedhere
\end{align*}
\end{proof}

\begin{theorem} \label{theorem -- rmult lipschitz} \standardassumptionsnos The function $\rmuM{s}{I}{J}{M}$ is Lipschitz continuous.
\end{theorem}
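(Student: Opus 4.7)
The plan is to establish the one-sided estimate
\[0 \le \rmuM{s'}{I}{J}{M} - \rmuM{s}{I}{J}{M} \le \frac{e(I;M)}{d!}\bigl((s')^d - s^d\bigr) \qquad \text{for } 0 < s \le s',\]
where $d = \dim R$, and then combine it with the mean-value bound $(s')^d - s^d \le d\,T^{d-1}(s'-s)$ for $s, s' \in [0, T]$ to deduce Lipschitz continuity on any bounded subinterval with constant $T^{d-1}e(I;M)/(d-1)!$.

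The lower bound is the monotonicity already recorded in Proposition \ref{rmult basics}\eqref{rmult s increasing}. For the upper bound, the key step is to exhibit a surjection from $I^{\lceil sp^e\rceil}M/I^{\lceil s'p^e\rceil}M$ onto the quotient $(I^{\lceil sp^e\rceil}+J^{[p^e]})M / (I^{\lceil s'p^e\rceil}+J^{[p^e]})M$. This comes from the second isomorphism theorem identification
\[\frac{I^{\lceil sp^e\rceil}M + J^{[p^e]}M}{I^{\lceil s'p^e\rceil}M + J^{[p^e]}M} \cong \frac{I^{\lceil sp^e\rceil}M}{I^{\lceil sp^e\rceil}M \cap (I^{\lceil s'p^e\rceil}M + J^{[p^e]}M)}\]
together with the containment $I^{\lceil s'p^e\rceil}M \subseteq I^{\lceil sp^e\rceil}M \cap (I^{\lceil s'p^e\rceil}M + J^{[p^e]}M)$. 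Rearranging the resulting length inequality gives
\[\length{M/(\ii{s}{p^e})M} - \length{M/(\iii{s'}{p^e}{I}{J})M} \le \length{M/I^{\lceil s'p^e\rceil}M} - \length{M/I^{\lceil sp^e\rceil}M}.\]

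Dividing by $p^{ed}$ and letting $e \to \infty$, the left-hand side converges to $\rmuM{s'}{I}{J}{M} - \rmuM{s}{I}{J}{M}$ by Theorem \ref{theorem -- main existence}. The right-hand side converges to $\frac{e(I;M)}{d!}((s')^d - s^d)$ by the standard Hilbert-Samuel expansion $\length{M/I^n M} = \frac{e(I;M)}{d!}n^d + o(n^d)$ evaluated at $n = \lceil sp^e\rceil$ and $n = \lceil s'p^e\rceil$; one need only check that the lower-order error terms vanish after dividing by $p^{ed}$, which holds because $n = \Theta(p^e)$. The principal obstacle I foresee is only this limit step on the right-hand side; everything else is a direct application of results already established in Section \ref{rmult section}.
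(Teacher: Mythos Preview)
Your approach is genuinely different from the paper's and considerably more elementary. The paper passes to a minimal reduction of $I$ (so that $I$ has $d$ generators), uses Lemma~\ref{lemma -- generator classification} to write down explicit $k$-vector space generators of the relevant quotient, and then bounds the count combinatorially via the numbers $S_d^m(r)$ and Lemma~\ref{lemma -- properties of S_d^m(r)}; this produces a single global Lipschitz constant $v^{d-1}\cdot\length{R/I}\cdot n$ that is independent of $s$. By contrast, your second-isomorphism-theorem surjection
\[
\frac{I^{\lceil sp^e\rceil}M}{I^{\lceil s'p^e\rceil}M}\twoheadrightarrow \frac{(I^{\lceil sp^e\rceil}+J^{[p^e]})M}{(I^{\lceil s'p^e\rceil}+J^{[p^e]})M}
\]
followed by the Hilbert--Samuel asymptotic is a one-line reduction that entirely avoids the combinatorics and the passage to a minimal reduction. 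It also yields the sharper inequality $\rmuM{s'}{I}{J}{M}-\rmuM{s}{I}{J}{M}\le \frac{e(I;M)}{d!}\bigl((s')^d-s^d\bigr)$, which is of independent interest. (Note a sign slip: your displayed inequality should read $\length{M/(\iii{s'}{p^e}{I}{J})M}-\length{M/(\ii{s}{p^e})M}$ on the left so that it converges to $\rmuM{s'}{I}{J}{M}-\rmuM{s}{I}{J}{M}$ as you then claim.)

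There is, however, one small gap you should close. The bound $\frac{e(I;M)}{d!}\bigl((s')^d-s^d\bigr)\le \frac{T^{d-1}e(I;M)}{(d-1)!}(s'-s)$ only gives Lipschitz continuity on each bounded interval $(0,T]$, with a constant that grows with $T$; this is \emph{local} Lipschitz, not the global Lipschitz continuity asserted in the theorem. The fix is immediate: for $s>\fthresh{I}{J}$ one has $I^{\lceil sp^e\rceil}\subseteq J^{[p^e]}$ for infinitely many $e$, so $\rmuM{s}{I}{J}{M}=e_{HK}(J;M)$ is constant on $(\fthresh{I}{J},\infty)$ (this is the strict-inequality part of Lemma~\ref{lemma -- thresholds vs rmult}, which does not invoke the present theorem). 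Combining constancy on that ray with your Lipschitz bound on $(0,\fthresh{I}{J}+1]$ gives a single global Lipschitz constant. With that addition, your argument is complete and arguably cleaner than the paper's.
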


\begin{proof} Let $\delta>0$.  The function $\rmuM{s}{I}{J}{M}$ is increasing by Proposition \ref{rmult basics}(\ref{rmult s increasing}), so we need only bound $\rmuM{s+\delta}{I}{J}{M}-\rmuM{s}{I}{J}{M}$ above in terms of $\delta$.  

Let $d=\dim R$. If $d=0$ then $\rmuM{s+\delta}{I}{J}{M}=\rmuM{s}{I}{J}{M}=\length{M}$, so 0 is a Lipschitz constant for $\rmuM{s}{I}{J}{M}$.  Suppose $d\geq 1$.
 We may assume that $R/\m$ is infinite, and so we may assume that $I$ is generated by $d$ elements by replacing it with a minimal reduction by Proposition \ref{rmult basics}(\ref{rmult integral closure}).  Let $I=(f_1,\ldots, f_d)$, let $\m=(x_1,\ldots, x_t)$, let $v\in \N$ such that $I^v\subseteq J$, and let $m_1,\ldots, m_n$ be a set of generators for $M$.  Then
\begin{align*}
\rmuM{s+\delta}{I}{J}{M}-\rmuM{s}{I}{J}{M}
=&\lim_{e\to\infty}\frac{1}{p^{ed}}\left(\length{M/(\ii{(s+\delta)}{p^e})M}-\length{M/(\ii{s}{p^e})M}\right)\\
=&\lim_{e\to\infty}\frac{1}{p^{ed}}\length{\frac{(\ii{s}{p^e})M}{(\ii{(s+\delta)}{p^e})M}}\\
=&\lim_{e\to\infty}\frac{1}{p^{ed}}\length{\frac{I^{\left\lceil sp^e\right\rceil} M}{(\ii{(s+\delta)}{p^e})M\cap I^{\left\lceil sp^e\right \rceil}M}}.
\end{align*}
The quotient module in the last line is generated as a $k$-vector space by elements of the form $f_1^{a_1}\cdots f_d^{a_d}gm_\alpha$, where $\sum_i a_i\geq sp^e$, $g$ is a $k$-vector space generator of $R/I$, and  $1\leq \alpha\leq n$.  However, if $\sum_i a_i\geq (s+\delta)p^e$ or $a_i\geq vp^e$ for some $i$, then the corresponding product vanishes.  Therefore,
\[\length{\frac{I^{\left\lceil sp^e\right\rceil} M}{(\ii{(s+\delta)}{p^e})M\cap I^{\left\lceil sp^e\right \rceil}M}}
\leq \left(S_d^{vp^e}((s+\delta)p^e)-S_d^{vp^e}(sp^e)\right)\cdot \length{R/I}\cdot n,\]
and so, by Lemma \ref{lemma -- properties of S_d^m(r)},
\begin{align*}
\rmuM{s+\delta}{I}{J}{M}-\rmuM{s}{I}{J}{M}&\leq \limsup_{e\to\infty}\frac{ (S_d^{vp^e}((s+\delta)p^e)-S_d^{vp^e}(sp^e))\cdot\length{R/I}\cdot n}{p^{ed}}\leq \delta \cdot v^{d-1}\cdot \length{R/I} \cdot n.\end{align*} Hence $v^{d-1}\cdot \length{R/I} \cdot n$ is a Lipschitz constant for $\rmuM{s}{I}{J}{M}$.
\end{proof}

Our most important application of Theorem \ref{theorem -- rmult lipschitz} is the next result, which proves that $\rmuM{s}{I}{J}{M}$ is additive on short exact sequences.  A direct consequence of this will be the Associativity Formula for $s$-multiplicity.

\begin{theorem}\label{rmult additivity} \standardassumptionsnosnom If $0\to M'\to M \to M''\to 0$ is a short exact sequence of finitely generated $R$-modules, then $\rmuM{s}{I}{J}{M}=\rmuM{s}{I}{J}{M'}+\rmuM{s}{I}{J}{M''}$.
\end{theorem}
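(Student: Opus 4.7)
The plan is to tensor the short exact sequence with $R/K_e$, where $K_e := \ii{s}{p^e}$, obtaining the four-term exact sequence
\[0 \to (M'\cap K_eM)/K_eM' \to M'/K_eM' \to M/K_eM \to M''/K_eM'' \to 0,\]
which yields $\length{M/K_eM} = \length{M'/K_eM'} + \length{M''/K_eM''} - \epsilon_e$, where $\epsilon_e := \length{(M'\cap K_eM)/K_eM'} \geq 0$. By Theorem \ref{theorem -- main existence}, the three outer terms divided by $p^{ed}$ converge to the corresponding values of $\rmuM{s}{I}{J}{\cdot}$, so additivity is equivalent to showing $\lim_{e \to \infty} \epsilon_e/p^{ed} = 0$.

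To control $\epsilon_e$, I would reduce to the case that $R/\m$ is infinite (via $R[X]_{\m R[X]}$) and, by Proposition \ref{rmult basics}(\ref{rmult integral closure}), replace $I$ by a $d$-generated minimal reduction. Artin-Rees applied to the $I$-filtration of $M$ relative to $M'$ produces $c \geq 0$ with $I^n M \cap M' \subseteq I^{n-c}M'$ for all $n \geq c$. The central structural step is the containment, valid for $\lceil sp^e\rceil \geq c$:
\[M' \cap K_eM \;\subseteq\; I^{\lceil sp^e\rceil - c}M' + (J^{[p^e]}M \cap M').\]
Decompose $u = \alpha + \beta \in M' \cap K_eM$ with $\alpha \in I^{\lceil sp^e\rceil}M$ and $\beta \in J^{[p^e]}M$. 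Passing to $\bar M := M/J^{[p^e]}M$, the class $\bar\alpha$ lies in both $I^{\lceil sp^e\rceil}\bar M$ and the image of $M'$; applying Artin-Rees uniformly in $e$ in this quotient places $\bar\alpha$ in $I^{\lceil sp^e\rceil - c}\overline{M'}$, so lifting back gives $\alpha \in I^{\lceil sp^e\rceil - c}M' + J^{[p^e]}M$, and the inclusion follows since the leftover $u - \alpha' \in J^{[p^e]}M$ lies in $M'$.

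Granted this containment, a subadditivity-of-length argument bounds
\[\epsilon_e \leq \length{M'/K_eM'} - \length{M'/(I^{\lceil sp^e\rceil - c} + J^{[p^e]})M'} + \length{(J^{[p^e]}M \cap M')/J^{[p^e]}M'}.\]
For the first difference, note $I^{\lceil sp^e\rceil - c} + J^{[p^e]} = \iii{s-c/p^e}{p^e}{I}{J}$; Theorem \ref{theorem -- rmult lipschitz} combined with a sandwich argument (using $s-\delta \leq s - c/p^e \leq s$ for arbitrary $\delta > 0$ and $e$ sufficiently large, together with continuity of $s' \mapsto \rmuM{s'}{I}{J}{M'}$) shows that the limit of this difference divided by $p^{ed}$ is $\rmuM{s}{I}{J}{M'} - \rmuM{s}{I}{J}{M'} = 0$. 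The second summand divided by $p^{ed}$ is exactly the additivity-error for Hilbert-Kunz multiplicity on the given short exact sequence, which vanishes in the limit by Monsky's additivity of $e_{HK}$.

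The hardest step is the uniform Artin-Rees bound in the second paragraph: the constant $c$ must control $I^n \bar M \cap \overline{M'}$ across the entire family of quotients $\bar M = M/J^{[p^e]}M$ as $e$ varies. One handles this either by tracking generators in these quotients using the $d$-generated structure of $I$ together with the Artin-Rees data for the pair $(M, M')$, or by invoking a uniform Artin-Rees result. Combined with the Lipschitz and Hilbert-Kunz additivity inputs this yields $\lim_e \epsilon_e/p^{ed} = 0$, proving the theorem.
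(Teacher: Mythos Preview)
Your proof has a genuine gap at precisely the point you identify as ``the hardest step.'' You need a single constant $c$, independent of $e$, such that
\[
I^{n}(M/J^{[p^e]}M)\cap\overline{M'}\;\subseteq\; I^{n-c}\,\overline{M'}
\]
for all $n$ and all $e$; equivalently, $(I^nM+J^{[p^e]}M)\cap M'\subseteq I^{n-c}M'+(J^{[p^e]}M\cap M')$. This is \emph{not} a consequence of the ordinary Artin--Rees lemma for the fixed pair $(M,M')$: the Artin--Rees constant does not in general descend to quotients. For instance, with $R=k[x,y,z]$, $M=R$, $M'=(z)$, $I=(x,y)$ one has $I^nM\cap M'=I^nM'$, so the Artin--Rees constant is $0$; yet modulo $N=(z-xy)$ one computes $I^n\bar M\cap\overline{M'}=xy(x,y)^{n-2}$, which is not contained in $I^{n-1}\overline{M'}=xy(x,y)^{n-1}$, so the constant in the quotient is at least $2$. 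Taking $N=(z-x^ay^b)$ makes the quotient constant $a+b$, so there is no uniform bound over arbitrary $N$. Your two suggested remedies are not arguments: ``tracking generators'' is not carried out, and Huneke-type uniform Artin--Rees theorems give uniformity over all \emph{ideals} for a fixed module pair, which is orthogonal to the uniformity over the family of \emph{quotients} $M/J^{[p^e]}M$ that you require. Without this bound, the key containment and hence the estimate on $\epsilon_e$ are unjustified.

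The paper's argument sidesteps this entirely. Rather than bounding $\epsilon_e$, it fixes $e$ and applies Monsky's Hilbert--Kunz additivity to the single ideal $\mathfrak a=I^{\lceil sp^e\rceil}+J^{[p^e]}$: the lengths of $M',M,M''$ modulo $\mathfrak a^{[p^{e'}]}$ are additive up to $O(p^{e'(d-1)})$. Using the elementary sandwich
\[
I^{\lceil(s+m/p^e)p^{e+e'}\rceil}+J^{[p^{e+e'}]}\;\subseteq\;\mathfrak a^{[p^{e'}]}\;\subseteq\;I^{\lceil sp^{e+e'}\rceil}+J^{[p^{e+e'}]}
\]
(where $m$ is the number of generators of $I$), dividing by $p^{(e+e')d}$, and letting $e'\to\infty$ gives $\rmuM{s}{I}{J}{M'}+\rmuM{s}{I}{J}{M''}\leq\rmuM{s+m/p^e}{I}{J}{M}$. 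Sending $e\to\infty$ and invoking the Lipschitz continuity of $h_s$ finishes one inequality; the other is the trivial one from right-exactness of tensoring, which you also observe. No Artin--Rees is used.
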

 
 \begin{proof} Let $d=\dim R$, let $m$ be the minimal number of generators of $I$, and fix $e\in \N$.  For any $e'\in\N$, we have that
 \[\ii{(s+m/p^e)}{p^{e+e'}}\subseteq \left(\ii{s}{p^e}\right)^{[p^{e'}]}\subseteq \ii{s}{p^{e+e'}}.\]
 By \cite[Theorem 1.6]{Monsky-HilbertKunzFunction}, we have that
 \begin{align*}
 &\length{\frac{M'}{(\ii{s}{p^{e+e'}})M'}}+\length{\frac{M''}{(\ii{s}{p^{e+e'}})M''}}\\
 \leq &\;\length{\frac{M'}{ \left(\ii{s}{p^e}\right)^{[p^{e'}]}M'}}+ \length{\frac{M''}{\left(\ii{s}{p^e}\right)^{[p^{e'}]}M''}}\\
 =& \;\length{\frac{M}{ \left(\ii{s}{p^e}\right)^{[p^{e'}]}M}}+O(p^{e'(d-1)})\\
 \leq &\; \length{\frac{M}{(\ii{(s+m/p^e)}{p^{e+e'}})M}}+O(p^{e'(d-1)}).
 \end{align*}
 Dividing by $p^{(e+e')d}$ and taking the limit as $e'\to \infty$, we obtain that
 \[\rmuM{s}{I}{J}{M'}+\rmuM{s}{I}{J}{M''}\leq \rmuM{s+m/p^e}{I}{J}{M}.\]
This holds for all $e$, and so $\rmuM{s}{I}{J}{M'}+\rmuM{s}{I}{J}{M''}\leq \rmuM{s}{I}{J}{M}$ since by Theorem \ref{theorem -- rmult lipschitz}, $\rmuM{s}{I}{J}{M}$ is continuous in $s$.
 
 For the other inequality, note that for any $e\in \N$, the sequence
 \[\frac{M'}{(\ii{s}{p^e})M'} \to \frac{M}{(\ii{s}{p^e})M}\to \frac{M''}{(\ii{s}{p^e})M''} \to 0\]
 is exact, whence
 \begin{align*}
 &\length{\frac{M'}{(\ii{s}{p^e})M'}}+\length{ \frac{M''}{(\ii{s}{p^e})M''}} \geq \length{ \frac{M}{(\ii{s}{p^e})M}}.\end{align*}
 Therefore $\rmuM{s}{I}{J}{M'}+\rmuM{s}{I}{J}{M''}\geq \rmuM{s}{I}{J}{M}$.
 \end{proof}
 
 The additivity of $\rmuM{s}{I}{J}{M}$ on short exact sequences is exactly what we need to prove the Associativity Formula for $s$-multiplicity.  This proof follows the proof in \cite[Theorem 23.5]{Nagata-LocalRings} for the Associativity Formula for Hilbert-Samuel multiplicity.
 
\begin{theorem}[The Associativity Formula] \label{rmult Associativity Formula} \standardassumptionsnos We have that
\[\rmuRM{R}{s}{I}{J}{M}=\sum_{\p\in \Assh R}\rmuR{R/\p}{s}{I(R/\p)}{J(R/\p)}\lengthR{R_\p}{M_\p}\]
where $\Assh R=\set{\p\in \Spec R}{\dim R/\p=\dim R}$.
\end{theorem}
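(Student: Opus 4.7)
The plan is to mimic the standard Nagata proof of the Hilbert-Samuel associativity formula, with Theorem \ref{rmult additivity} playing the role of additivity of Hilbert-Samuel multiplicity on short exact sequences. First I would invoke the standard fact that any finitely generated module $M$ over a noetherian ring $R$ admits a prime filtration
\[0=M_0\subset M_1\subset \cdots\subset M_n=M\]
with $M_i/M_{i-1}\cong R/\p_i$ for primes $\p_i\in\Spec R$. Applying Theorem \ref{rmult additivity} inductively to the short exact sequences $0\to M_{i-1}\to M_i\to R/\p_i\to 0$ would yield
\[\rmuM{s}{I}{J}{M}=\sum_{i=1}^n \rmuM{s}{I}{J}{R/\p_i}.\]

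Next I would reduce the sum to those primes lying in $\Assh R$. If $\dim R/\p_i<d$, then $\dim(R/\p_i)<\dim R$ and Proposition \ref{rmult basics}(\ref{rmult low dim}) forces $\rmuM{s}{I}{J}{R/\p_i}=0$. So only the indices with $\p_i\in\Assh R$ can contribute. For such a $\p_i=\p$, the $R$-module length of $(R/\p)/(I^{\lceil sp^e\rceil}+J^{[p^e]})(R/\p)$ coincides with its length as an $R/\p$-module, since the module is annihilated by $\p$; moreover $I(R/\p)$ and $J(R/\p)$ are $(\m/\p)$-primary in $R/\p$ and $\dim R/\p=d$, so
\[\rmuM{s}{I}{J}{R/\p}=\rmuR{R/\p}{s}{I(R/\p)}{J(R/\p)}.\]

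The remaining step is to identify, for each $\p\in\Assh R$, the number $n_\p:=\#\{i:\p_i=\p\}$ with $\lengthR{R_\p}{M_\p}$. Here I would localize the filtration at $\p$: since $\p\in\Assh R$ is necessarily a minimal prime (any strictly smaller prime would yield strictly larger dimension, contradicting $\p\in\Assh R$), one has $(R/\p_i)_\p=0$ unless $\p_i=\p$, in which case $(R/\p)_\p=\kappa(\p)$ has $R_\p$-length one. Thus the localized filtration of $M_\p$ has exactly $n_\p$ nonzero composition factors, each of $R_\p$-length $1$, giving $\lengthR{R_\p}{M_\p}=n_\p$. Combining the pieces,
\[\rmuM{s}{I}{J}{M}=\sum_{\p\in\Assh R} n_\p\cdot\rmuR{R/\p}{s}{I(R/\p)}{J(R/\p)}=\sum_{\p\in\Assh R}\lengthR{R_\p}{M_\p}\cdot\rmuR{R/\p}{s}{I(R/\p)}{J(R/\p)},\]
as desired.

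The argument is essentially bookkeeping once additivity is in hand, so there is no serious obstacle; the only minor subtlety is the identification of $R$-length and $R/\p$-length in the middle step, which is immediate because the relevant quotient is already a module over $R/\p$.
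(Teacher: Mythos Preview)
Your proof is correct and follows essentially the same Nagata-style approach as the paper: reduce to cyclic modules $R/\p$ via additivity (Theorem~\ref{rmult additivity}), discard those with $\dim R/\p<d$ via Proposition~\ref{rmult basics}(\ref{rmult low dim}), and identify the multiplicities over $R$ and $R/\p$. The only cosmetic difference is that you use a full prime filtration of $M$ up front and then count occurrences of each $\p\in\Assh R$ by localizing, whereas the paper inducts on $\sigma(M)=\sum_{\p\in\Assh R}\lengthR{R_\p}{M_\p}$, peeling off one copy of $R/\q$ at a time via an element $x\in M$ with $\mathrm{Ann}(x)=\q\in\Assh R$.
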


\begin{proof} We proceed by induction on $\sigma(M)=\sum_{\p\in\Assh R}\lengthR{R_\p}{M_\p}$.  If $\sigma(M)=0$, then $\dim M<\dim R$ and so $\rmuRM{R}{s}{I}{J}{M}=0$.

Now suppose that $\sigma(M)\geq 1$ and fix $\q\in \Assh R$ such that $\lengthR{R_\q}{M_\q}\geq 1$.  Then $\q=(0:_R x)$ for some $x\in M$ and so we have an exact sequence
\begin{equation*}  
0\to R/\q\to M\to M/Rx\to 0.\end{equation*}
We have that $\sigma(M/Rx)=\sigma(M)-1$ and so by induction, 
\begin{align*}\rmuRM{R}{s}{I}{J}{M/Rx}=&\sum_{\p\in\Assh R}\rmuR{R/\p}{s}{I(R/\p)}{J(R/\p)}\lengthR{R_\p}{(M/Rx)_\p}\\
=&
\sum_{\p\in\Assh R}\rmuR{R/\p}{s}{I(R/\p)}{J(R/\p)}\lengthR{R_\p}{M_\p}-\rmuR{R/\q}{s}{I(R/\q)}{J(R/\q)}.\end{align*}
Therefore, it suffices to show that $\rmuRM{R}{s}{I}{J}{R/\q}=\rmuR{R/\q}{s}{I(R/\q)}{J(R/\q)}$ since then by 
Theorem \ref{rmult additivity} we will have the desired formula.  This, however, is an easy computation:
\begin{align*}
\rmuRM{R}{s}{I}{J}{R/\q}
=& \lim_{e\to\infty} \frac{1}{p^{ed}}\lengthR{R}{\frac{R/\q}{(\ii{s}{p^e})R/\q}}\\
=&\lim_{e\to\infty} \frac{1}{p^{ed}}\lengthR{R/\q}{\frac{R/\q}{\iii{s}{p^e}{(I(R/\q))}{(J(R/\q))}}}
=\rmuR{R/\q}{s}{I(R/\q)}{J(R/\q)}. \qedhere
\end{align*}
\end{proof}

\section{$s$-Multiplicity}\label{smult section}

The behavior of $\rmuM{s}{I}{J}{M}$ is related to two thresholds concerning the interactions between powers and Frobenius powers of ideals.

\begin{definition}(\cite{MustataTakagiWatanabe-FthresholdsBernsteinSatoPolynomials}, \cite{dSNBP-ExistenceofFthresholds}) Let $R$ be a ring of characteristic $p>0$, and let $I,J$ be ideals of $R$.  For $e\in \N$, let 
\[\nu_J^I(p^e)=\sup\set{n\in\N}{I^n\not\subseteq J^{[p^e]}}\qquad
\text{and}\qquad
\mu_J^I(p^e)=\inf\set{n\in\N}{J^{[p^e]}\not\subseteq I^n}.\]
The \emph{$F$-threshold of $I$ with respect to $J$} is $\displaystyle{\fthresh{I}{J}=\lim_{e\to\infty}\frac{\nu_J^I(e)}{p^e}}$.
Similarly, we set
$\displaystyle{\sthresh{I}{J}=\lim_{e\to\infty}\frac{\mu_J^I(e)}{p^e}}$.
\end{definition}

\begin{lemma} Let $R$ be a ring of characteristic $p>0$, and let $I,J$ be ideals of $R$.  The limits defining $\fthresh{I}{J}$ and $\sthresh{I}{J}$ are defined.  Furthermore, if $I\not\subseteq \sqrt{J}$ then $\fthresh{I}{J}=\infty$, if $J=R$  then $\fthresh{I}{J}=-\infty$, and if $I\subseteq \sqrt{J}\neq R$ then $0\leq \fthresh{I}{J}<\infty$.  Similarly, if $J\not\subseteq \sqrt{I}$, then $\sthresh{I}{J}=0$, if $I=R$ then $\sthresh{I}{J}=\infty$, and if $J\subseteq \sqrt{I}\neq R$ then $\sthresh{I}{J}>0$.  If $I\not\subseteq \sqrt{0}$, $I\subseteq\sqrt{J}$, $J\subseteq \sqrt{I}$, and $I$ is contained in the Jacobson radical of $R$, then $\sthresh{I}{J}\leq \fthresh{I}{J}$.
\end{lemma}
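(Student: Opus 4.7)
The plan is to take the assertions one at a time. The bulk are routine checks directly from the definitions; existence of the limits I would quote from \cite{dSNBP-ExistenceofFthresholds}; the only substantive step is the final inequality, which I would prove by a short Nakayama argument.

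First I dispose of the extreme cases. If $I \not\subseteq \sqrt{J}$, pick $f \in I$ with $f^n \notin J$ for every $n$; since $J^{[p^e]} \subseteq J$, each $f^n$ lies in $I^n \setminus J^{[p^e]}$, so $\nu_J^I(p^e) = \infty$ and $\fthresh{I}{J} = \infty$. If $J = R$ then $J^{[p^e]} = R$ and the supremum defining $\nu_J^I(p^e)$ is over the empty set, so $\fthresh{I}{J} = -\infty$. Symmetrically, if $J \not\subseteq \sqrt{I}$, an element $f \in J$ with $f^n \notin I$ for all $n$ gives $f^{p^e} \in J^{[p^e]} \setminus I$, forcing $\mu_J^I(p^e) = 1$ and $\sthresh{I}{J} = 0$; and $I = R$ makes every $I^n = R$, so $\mu_J^I(p^e) = \infty$. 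In the remaining cases $I \subseteq \sqrt{J} \neq R$ and $J \subseteq \sqrt{I} \neq R$, I cite \cite{dSNBP-ExistenceofFthresholds} for the existence of the limits. Finiteness of $\fthresh{I}{J}$ follows from $I^N \subseteq J$ (some $N$) together with the pigeonhole fact $I^{m(p^e-1)+1} \subseteq I^{[p^e]}$ (with $m$ generators of $I$), giving $I^{Nmp^e} \subseteq (I^{[p^e]})^N \subseteq J^{[p^e]}$; positivity of $\sthresh{I}{J}$ follows from $J^{[p^e]} \subseteq J^{p^e} \subseteq I^{\lfloor p^e/N' \rfloor}$ when $J^{N'} \subseteq I$; and $\fthresh{I}{J} \geq 0$ is immediate from $I^0 = R \not\subseteq J^{[p^e]} \neq R$.

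For the final inequality, note first that since the Jacobson radical of a nonzero ring is always proper, $I$ being in it forces $I \neq R$, whence $\sqrt{I} \neq R$ and thus $J \subseteq \sqrt{I}$ gives $J \neq R$; so both limits exist and are finite by the previous paragraph. Fix $e$ and set $\nu = \nu_J^I(p^e)$, $\mu = \mu_J^I(p^e)$. By the defining suprema and infima,
\[ I^{\nu+1} \subseteq J^{[p^e]} \subseteq I^{\mu-1}. \]
Suppose for contradiction that $\mu \geq \nu + 3$. Then $\mu - 1 \geq \nu + 2$, so $I^{\mu-1} \subseteq I^{\nu+2} = I \cdot I^{\nu+1}$, and the sandwich collapses to $I^{\nu+1} = I \cdot I^{\nu+1}$. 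Since $R$ is Noetherian the ideal $I^{\nu+1}$ is finitely generated, and since $I$ lies in the Jacobson radical, Nakayama's lemma forces $I^{\nu+1} = 0$. But $I \not\subseteq \sqrt{0}$ produces $f \in I$ with $f^{\nu+1} \neq 0$, a contradiction. Hence $\mu_J^I(p^e) \leq \nu_J^I(p^e) + 2$ for every $e$, and dividing by $p^e$ and letting $e \to \infty$ yields $\sthresh{I}{J} \leq \fthresh{I}{J}$.

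I expect the only serious obstacle to be the existence of the limits themselves: the natural subadditivity estimates between $\nu_J^I(p^{e+1})$ and $\nu_J^I(p^e)$ lose a factor equal to the number of generators of $I$, and making this tight enough to extract a limit is genuinely nontrivial. I would not reprove it but rather cite the existing reference. The inequality itself is entirely elementary once the sandwich $I^{\nu+1} \subseteq J^{[p^e]} \subseteq I^{\mu-1}$ is in place and one observes that a gap larger than two forces $I^{\nu+1}$ to be fixed by multiplication by $I$.
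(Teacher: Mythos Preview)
Your proposal is correct and follows essentially the same route as the paper: the extreme cases are handled identically, and the final inequality is proved via the same sandwich $I^{\nu+1}\subseteq J^{[p^e]}\subseteq I^{\mu-1}$ followed by Nakayama (the paper compresses your contradiction into the single line ``by Nakayama's Lemma, $\nu+1\geq \mu-1$'').

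One small point to flag: you cite \cite{dSNBP-ExistenceofFthresholds} for the existence of \emph{both} limits, but that reference treats the $F$-threshold $\fthresh{I}{J}$, not $\sthresh{I}{J}$. The paper handles this by writing out the analogous four-line argument for $\sthresh{I}{J}$ directly: from $J^{[p^{e+e'}]}=(J^{[p^{e'}]})^{[p^e]}\subseteq (I^{\mu_J^I(p^{e'})-1})^{[p^e]}\subseteq I^{p^e\mu_J^I(p^{e'})-p^e}$ one gets $\mu_J^I(p^{e+e'})>p^e\mu_J^I(p^{e'})-p^e$, whence $\liminf_e \mu_J^I(p^e)/p^e\geq (\mu_J^I(p^{e'})-1)/p^{e'}$ for every $e'$, and the limit exists. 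You should either include this short computation or note explicitly that the cited argument adapts verbatim, rather than asserting that the reference already contains it.
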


\begin{proof} If $I\not\subseteq \sqrt{J}$, then $\nu_J^I(p^e)=\infty$ for all $e$ and so $\fthresh{I}{J}=\infty$. If $J=R$ then $\nu_J^I(p^e)=-\infty$ for all $e$ and so $\fthresh{I}{J}=-\infty$.  Suppose $1\subseteq \sqrt{J}\neq R$, so that for all $e$, $0\leq \nu_J^I(p^e)$ and so $\fthresh{I}{J}\geq 0$. That $\fthresh{I}{J}$ exists in the case $I\subseteq \sqrt{J}$ is \cite[Theorem 3.4]{dSNBP-ExistenceofFthresholds}, the proof of which also shows that $\fthresh{I}{J}<\infty$ in this case.

If $J\not\subseteq \sqrt{I}$, then $\mu_J^I(p^e)=1$ for all $e$ and so $\sthresh{I}{J}=0$.  If $I=R$, then $\mu_J^I(p^e)=\infty$ for all $e$, and so $\sthresh{I}{J}=\infty$.  Suppose that $J\subseteq \sqrt{I}\neq R$.  The proof of the existence of $\sthresh{I}{J}$ in this case is nearly identical to that of the existence of $\fthresh{I}{J}$.   Let $e,e'\in \N$.  We have that $J^{[p^{e+e'}]}=\left(J^{[p^{e'}]}\right)^{[p^{e}]}\subseteq \left(I^{\mu_J^I(p^{e'})-1}\right)^{[p^{e}]}\subseteq I^{p^e\mu_J^I(p^{e'})-p^e}$, and so $\mu_J^I(p^{e+e'})>p^e\mu_J^I(p^{e'})-p^e$.  Therefore,
\[\liminf_{e\to\infty}\frac{\mu_J^I(p^e)}{p^e}=\liminf_{e\to\infty}\frac{\mu_J^I(p^{e+e'})}{p^{e+e'}}\geq \lim_{e\to\infty}\frac{\mu_J^I(p^{e'})-1}{p^{e'}}=\frac{\mu_J^I(p^{e'})-1}{p^{e'}}\]
Hence $\displaystyle{\liminf_{e\to\infty}\frac{\mu_J^I(p^e)}{p^e}\geq \limsup_{e'\to\infty}\frac{\mu_J^I(p^{e'})-1}{p^{e'}}}=\limsup_{e'\to\infty}\frac{\mu_J^I(p^{e'})}{p^{e'}}$ and so the limit defining $\sthresh{I}{J}$ exists.  Since $J\subseteq \sqrt{I}$, there exists $e\in \N$ such that $J^{[p^e]}\subseteq I$, and so $\mu_J^I(p^e)\geq 2$.  Hence, $\sthresh{I}{J}\geq \frac{\mu_J^I(p^{e})-1}{p^{e}}>0$.

For the last statement, suppose $I\not\subseteq \sqrt{0}$, $ I\subseteq \sqrt{J}$, $J\subseteq \sqrt{I}$, and $I$ is in the Jacobson radical of $R$.  For $e\in \N$ , we have that $I^{\nu_J^I(p^e)+1}\subseteq J^{[p^e]}\subseteq I^{\mu_J^I(p^e)-1}$.  By Nakayama's Lemma, we have that $\nu_J^I(p^e)+1\geq \mu_J^I(p^e)-1$.  Therefore, 
\[\fthresh{I}{J}=\lim_{e\to\infty}\frac{\nu_J^I(p^e)}{p^e}\geq \lim_{e\to\infty}\frac{\mu_J^I(p^e)-2}{p^e}=\sthresh{I}{J}.\qedhere\]
\end{proof}

\begin{lemma} \label{lemma -- thresholds vs rmult} \standardassumptionsnosdim
\begin{enumerate}
\item If $s\leq\sthresh{I}{J}$ then $\rmuM{s}{I}{J}{M}=\frac{s^d}{d!}e(I;M)$.
\item If $s\geq\fthresh{I}{J}$ then $\rmuM{s}{I}{J}{M}=e_{HK}(J;M)$.
\end{enumerate}
\end{lemma}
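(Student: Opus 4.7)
The plan is to prove each part by first handling the strict inequality case (where $s<\sthresh{I}{J}$ or $s>\fthresh{I}{J}$), where one of the two ideals $I^{\lceil sp^e\rceil}$ and $J^{[p^e]}$ actually contains the other for all sufficiently large $e$, and then to extend to the boundary case by invoking the continuity of $\rmuM{s}{I}{J}{M}$ established in Theorem \ref{theorem -- rmult lipschitz}.

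For part (1), I would first suppose $s<\sthresh{I}{J}$ and choose an $s'$ strictly between. Since $\mu_J^I(p^e)/p^e\to\sthresh{I}{J}$, for all $e\gg 0$ I have $\mu_J^I(p^e)>s'p^e$, and since $(s'-s)p^e\to\infty$ I may further arrange that $\mu_J^I(p^e)\geq \lceil sp^e\rceil+1$. By definition of $\mu_J^I$ this gives $J^{[p^e]}\subseteq I^{\mu_J^I(p^e)-1}\subseteq I^{\lceil sp^e\rceil}$, so that $I^{\lceil sp^e\rceil}+J^{[p^e]}=I^{\lceil sp^e\rceil}$ for all large $e$. Then
\[\rmuM{s}{I}{J}{M}=\lim_{e\to\infty}\frac{\length{M/I^{\lceil sp^e\rceil}M}}{p^{ed}}=\lim_{e\to\infty}\frac{\length{M/I^{\lceil sp^e\rceil}M}}{\lceil sp^e\rceil^d}\cdot\frac{\lceil sp^e\rceil^d}{p^{ed}}=\frac{s^d}{d!}e(I;M),\]
using the Hilbert-Samuel limit applied to the subsequence $\lceil sp^e\rceil$ (which tends to infinity) and the fact that $\lceil sp^e\rceil/p^e\to s$. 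The boundary case $s=\sthresh{I}{J}$ then follows because both sides of the desired equality are continuous functions of $s$ (the left side by Theorem \ref{theorem -- rmult lipschitz}, the right side trivially), so I can take $s'\uparrow\sthresh{I}{J}$ along values where the equality has already been proved.

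For part (2), the argument is symmetric. If $s>\fthresh{I}{J}$, I choose $s'$ with $\fthresh{I}{J}<s'<s$; for $e\gg 0$, $\nu_J^I(p^e)<s'p^e$, and after absorbing an additive constant I may arrange $\nu_J^I(p^e)+1\leq\lceil sp^e\rceil$. Then $I^{\lceil sp^e\rceil}\subseteq I^{\nu_J^I(p^e)+1}\subseteq J^{[p^e]}$, so $I^{\lceil sp^e\rceil}+J^{[p^e]}=J^{[p^e]}$, and
\[\rmuM{s}{I}{J}{M}=\lim_{e\to\infty}\frac{\length{M/J^{[p^e]}M}}{p^{ed}}=e_{HK}(J;M).\]
The boundary case $s=\fthresh{I}{J}$ again follows from the Lipschitz continuity of $\rmuM{s}{I}{J}{M}$, this time by letting $s'\downarrow\fthresh{I}{J}$ along values in the already-proved range.

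The main technical step is the passage from the inequality $\mu_J^I(p^e)>s'p^e$ (respectively $\nu_J^I(p^e)<s'p^e$) between real numbers to the integer-valued containment $J^{[p^e]}\subseteq I^{\lceil sp^e\rceil}$ (respectively $I^{\lceil sp^e\rceil}\subseteq J^{[p^e]}$); this is purely bookkeeping with ceilings, and amounts to observing that $(s'-s)p^e\to\infty$. Apart from that, the proof is essentially a direct invocation of the definitions together with continuity, so I do not anticipate a serious obstacle.
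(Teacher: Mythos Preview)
Your proposal is correct and follows essentially the same route as the paper: treat the strict-inequality case first, where one of $I^{\lceil sp^e\rceil}$, $J^{[p^e]}$ contains the other and the limit collapses to the Hilbert--Samuel or Hilbert--Kunz expression, then appeal to the Lipschitz continuity of $\rmuM{s}{I}{J}{M}$ for the boundary values $s=\sthresh{I}{J}$ and $s=\fthresh{I}{J}$. The only cosmetic difference is that the paper is content with ``for infinitely many $e$'' (enough because the limit is already known to exist), whereas you spell out the intermediate $s'$ and obtain the containment for all $e\gg 0$; your version of the ceiling bookkeeping is equivalent to the paper's.
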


\begin{proof} If $s<\sthresh{I}{J}$, then for infinitely many $e\in\N$, $\mu_J^I(p^e)>\lceil sp^e \rceil$, and so $J^{[p^e]}\subseteq I^{\lceil sp^e\rceil}$.  Therefore
\begin{align*}
\rmuM{s}{I}{J}{M}
=&  \lim_{e\to\infty}\frac{\length{M/(I^{\lceil sp^e\rceil} +J^{[p^e]})M}}{p^{ed}}\\
=&\lim_{e\to\infty}\frac{\length{M/I^{\lceil sp^e\rceil}M}}{p^{ed}}
=\lim_{e\to\infty}\frac{\length{M/I^{\lceil sp^e\rceil}M}}{(\lceil sp^e\rceil)^d}\cdot\frac{(\lceil sp^e\rceil)^d}{p^{ed}}
=\frac{e(I;M)s^d}{d!}.
\end{align*}
If $s>\fthresh{I}{J}$, then for infinitely many $e\in\N$, $\nu_J^I(p^e)<\lceil sp^e\rceil$, and so $I^{\lceil sp^e\rceil}\subseteq J^{[p^e]}$.   Therefore
\[\rmuM{s}{I}{J}{M}
=  \lim_{e\to\infty}\frac{\length{M/(I^{\lceil sp^e\rceil} +J^{[p^e]})M}}{p^{ed}}
=\lim_{e\to\infty}\frac{\length{M/J^{[p^e]}M}}{p^{ed}}=e_{HK}(J;M).\]
The continuity of $\rmuM{s}{I}{J}{M}$ gives the cases $s=\sthresh{I}{J}$ and $s=\fthresh{I}{J}$.
\end{proof}

When $s$ is large, then $\rmuM{s}{I}{J}{M}$ precisely equals $e_{HK}(J)$, while when $s$ is small it equals a well-understood multiple of $e(I)$ depending only on $s$ and the dimension of the ring. Hence, in order to properly interpolate between the two functions we need a normalizing factor that will take this difference in behavior into account.
To determine a good candidate for this factor, we look at one of the most notable properties of $e(-)$ and $e_{HK}(-)$, namely, if $(R,\m)$ is a regular local ring of positive characteristic, then $e(\m)=e_{HK}(\m)=1$.   To that end, we calculate $\rmuu{s}{R}$ for power series rings over a field.

\begin{proposition} \label{H_s(d) combinatorial} If $k$ is a field of characteristic $p>0$ and $R=k[[x_1,\ldots,x_d]]$, then
\[\displaystyle{\rmuu{s}{R}=\sum_{i=0}^{\lfloor s\rfloor}\frac{(-1)^i}{d!}\BC{d}{i}(s-i)^d}.\]
\end{proposition}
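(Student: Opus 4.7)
The plan is to compute $\rmuu{s}{R} = \lim_{e\to\infty} \length{R/(\m^{\lceil sp^e\rceil}+\m^{[p^e]})}/p^{ed}$ directly, exploiting the fact that $R$ is a formal power series ring so that monomials give an explicit $k$-basis.

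First I would observe that since $\m^{[p^e]} = (x_1^{p^e},\ldots,x_d^{p^e})$ and $\m^{\lceil sp^e\rceil}$ is generated by monomials of total degree at least $\lceil sp^e\rceil$, the quotient $R/(\m^{\lceil sp^e\rceil}+\m^{[p^e]})$ has as a $k$-basis exactly those monomials $x_1^{a_1}\cdots x_d^{a_d}$ satisfying $a_1+\cdots+a_d<\lceil sp^e\rceil$ and $a_i<p^e$ for each $i$. This is precisely the quantity $S_d^{p^e}(\lceil sp^e\rceil)$ in the notation introduced before Lemma \ref{S_d^m(r) combinatorial}. Hence
\[\length{R/(\m^{\lceil sp^e\rceil}+\m^{[p^e]})} = S_d^{p^e}(\lceil sp^e\rceil).\]

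Next I would apply Lemma \ref{S_d^m(r) combinatorial} with $m=p^e$ and $r=\lceil sp^e\rceil$ to obtain
\[S_d^{p^e}(\lceil sp^e\rceil)=\sum_{i=0}^{d}(-1)^i\BC{d}{i}\BC{\lceil sp^e\rceil-ip^e-1+d}{d},\]
where as usual the binomial coefficient $\binom{n}{d}$ is interpreted as $0$ for $n<d$. Writing $s = \lfloor s\rfloor + \{s\}$, a direct check shows $\lceil sp^e\rceil = \lfloor s\rfloor p^e + \lceil \{s\}p^e\rceil$, so for $i\geq \lfloor s\rfloor+1$ we have $\lceil sp^e\rceil-ip^e\leq 0$ for all $e$, making those binomials vanish. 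For $i\leq \lfloor s\rfloor$, on the other hand, $\lceil sp^e\rceil - ip^e$ grows linearly in $p^e$, and the binomial coefficient $\binom{\lceil sp^e\rceil-ip^e-1+d}{d}$ is a polynomial in $\lceil sp^e\rceil-ip^e$ of degree $d$ with leading coefficient $1/d!$.

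Finally I would divide by $p^{ed}$ and pass to the limit. For each $i$ with $0\leq i\leq\lfloor s\rfloor$, since $\lceil sp^e\rceil/p^e\to s$ as $e\to\infty$, the leading term dominates and
\[\lim_{e\to\infty}\frac{1}{p^{ed}}\BC{\lceil sp^e\rceil-ip^e-1+d}{d}=\frac{(s-i)^d}{d!}.\]
Combining this with the vanishing of the higher-$i$ terms yields the claimed formula
\[\rmuu{s}{R}=\sum_{i=0}^{\lfloor s\rfloor}\frac{(-1)^i}{d!}\BC{d}{i}(s-i)^d.\]
The only mild obstacle is being careful with the ceiling function and with the boundary case when $s$ is an integer: if $\{s\}=0$ then the $i=\lfloor s\rfloor$ term has $\lceil sp^e\rceil-ip^e=0$ so the binomial is already $0$, but this is harmless since the summand in the target formula also vanishes as $(s-\lfloor s\rfloor)^d=0$.
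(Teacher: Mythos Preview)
Your proposal is correct and follows essentially the same approach as the paper: identify the colength as $S_d^{p^e}(sp^e)$ by counting monomials, apply Lemma~\ref{S_d^m(r) combinatorial}, and take the limit term by term. The paper's proof is terser (and treats $d=0$ separately since $S_d^m(r)$ is only defined for positive $d$), but your added care with the ceiling function and the integer-$s$ boundary case is a welcome elaboration rather than a different argument.
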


\begin{proof} Let $\m=(x_1,\ldots,x_d)$.  If $d=0$, then $\m=0$, and so $\rmuu{s}{R}=1=\sum_{i=0}^{\lfloor s\rfloor}(-1)^i\BC{0}{i}(s-i)^0$.  If $d\geq 1$, then by Lemma \ref{S_d^m(r) combinatorial} we have that
\begin{align*}
\rmuu{s}{R} = \lim_{e\to \infty}\frac{S_d^m(sp^e)}{p^{ed}}
&=\sum_{i=0}^d (-1)^i\BC{d}{i}\lim_{e\to \infty}\frac{1}{p^{ed}}\BC{\lceil sp^e\rceil -ip^e-1+d}{d}=\sum_{i=0}^{\lfloor s\rfloor}\frac{(-1)^i}{d!}\BC{d}{i}(s-i)^d.\qedhere
\end{align*}
\end{proof}

Proposition \ref{H_s(d) combinatorial} gives us our normalizing factor, and so we are ready to define the $s$-multiplicity.

\begin{definition} \label{definition -- s-multiplicity} \standardassumptionsgreaterthan  Then the \emph{$s$-multiplicity} of $M$ with respect to the pair $(I,J)$ is defined to be
\[\smuM{s}{I}{J}{M}=\frac{\rmuM{s}{I}{J}{M}}{\nfact{d}{s}},\]
where 
$\displaystyle{\rmuu{s}{R}=\sum_{i=0}^{\lfloor s\rfloor}\frac{(-1)^i}{d!}\BC{d}{i}(s-i)^d}$.
We may write $\smu{s}{I}{J}$ for $\smuM{s}{I}{J}{R}$, $\smuuM{s}{I}{M}$ for $\smuM{s}{I}{I}{M}$, $\smuu{s}{I}$ for $\smuuM{s}{I}{R}$, and $\smuu{s}{M}$ for $\smuuM{s}{\m}{M}$.  If we wish to emphasize the ring $R$, we will write $\smuRM{R}{s}{I}{J}{M}$ or a similarly decorated variant.\end{definition}

In order to describe the interpolating properties of the $s$-multiplicity, we need some additional facts about the functions $\nfact{d}{s}$.   First we describe the functions explicitly for $d$ up to 3:

\begin{example} 
\begin{align*}
\nfact{0}{s}=&\;1\\
\nfact{1}{s}=&\begin{cases} s & \text{ if } 0<s< 1\\
1 & \text { if } s\geq 1\end{cases}\\
\end{align*}
\begin{align*}
\nfact{2}{s}=&\begin{cases}  \frac{1}{2}s^2 & \text{ if } 0<s< 1\\ \frac{1}{2}s^2-(s-1)^2 & \text{ if } 1\leq s < 2 \\  1 & \text{ if } s\geq 2\end{cases}\\
\nfact{3}{s}=&\begin{cases}  \frac{1}{6}s^3 & \text{ if } 0<s<1\\ \frac{1}{6}s^3-\frac{1}{2}(s-1)^3 & \text{ if } 1\leq s<  2 \\  \frac{1}{6}s^3-\frac{1}{2}(s-1)^3+\frac{1}{2}(s-2)^3 & \text{ if } 2\leq s< 3 \\ 1 & \text{ if } s\geq 3\end{cases}
\end{align*}
\end{example}

Certain properties of $\nfact{d}{s}$ are suggested by the above examples, and are confirmed in the next lemma.

\begin{lemma} \label{lemma -- properties of E_d(s)}  The functions $\nfact{d}{s}$ have the following properties.
\begin{enumerate}[(i)]
\item \label{nfact recursion} If $d\geq 1$, then $\nfact{d}{s}=\int_{s-1}^s\nfact{d-1}{t}\mathrm{d}t$.
\item \label{nfact increasing} $\nfact{d}{s}$ is nondecreasing.
\item \label{item continuous} $\nfact{d}{s}$ is a Lipschitz continuous function of $s$.
\item \label{item right tail} If $s\geq d$, then $\nfact{d}{s}=1$.
\item \label{item first interval behavior} If $0<s\leq 1$, then $\nfact{d}{s}=s^d/d!$.
\end{enumerate}
\end{lemma}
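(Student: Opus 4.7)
The plan is to establish the recursion (i) by direct computation; statements (ii)--(v) then follow by induction on $d$ using (i). The main obstacle is the bookkeeping in (i), where the floor-function cutoff in the summation must be handled cleanly. The trick is to rewrite the formula using truncated powers $(x)_+ := \max(x,0)$, which absorb the cutoff and avoid a case analysis on where $s$ and $s-1$ lie relative to consecutive integers.

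For (i), because terms with $i > \lfloor s \rfloor$ vanish and $\binom{d}{i} = 0$ for $i > d$, we may rewrite the defining formula as
\[\nfact{d}{s} = \frac{1}{d!}\sum_{i=0}^{d}(-1)^i \binom{d}{i}(s-i)_+^d,\]
extending by $\nfact{d-1}{t} = 0$ for $t \leq 0$ (consistent with an empty summation). Using the antiderivative identity $\int_{s-1}^s (t-i)_+^{d-1}\, dt = \tfrac{1}{d}\bigl[(s-i)_+^d - (s-1-i)_+^d\bigr]$, I would substitute into $\int_{s-1}^s \nfact{d-1}{t}\, dt$, split into two sums, reindex the shifted half by $j = i+1$, and apply Pascal's rule $\binom{d-1}{i} + \binom{d-1}{i-1} = \binom{d}{i}$ (with boundary cases $\binom{d-1}{-1} = \binom{d-1}{d} = 0$) to consolidate coefficients, recovering $\nfact{d}{s}$.

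Statement (v) is immediate: for $0 < s \leq 1$ only the $i=0$ term of the defining sum contributes, yielding $s^d/d!$. The remaining items go by induction on $d$, with $d = 0$ trivial since $\nfact{0}{s} \equiv 1$. For (iv): if $s \geq d \geq 1$, then $s - 1 \geq d - 1$, so by the inductive hypothesis $\nfact{d-1}{t} = 1$ on $[s-1,s]$, and (i) gives $\nfact{d}{s} = 1$. For (ii): by (i), $\nfact{d}{s}$ is continuous and differentiable at non-integer $s$ with derivative $\nfact{d-1}{s} - \nfact{d-1}{s-1}$, nonnegative by the inductive hypothesis. Finally for (iii): by the parts already proved for $d-1$, inductively $0 \leq \nfact{d-1}{t} \leq 1$; hence for $s_1 < s_2$,
\[\nfact{d}{s_2} - \nfact{d}{s_1} = \int_{s_1}^{s_2}\bigl[\nfact{d-1}{t} - \nfact{d-1}{t-1}\bigr]\, dt \leq s_2 - s_1,\]
yielding Lipschitz constant $1$.
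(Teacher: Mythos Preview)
Your proof is correct. The inductive arguments for (ii)--(v) are essentially the same as the paper's, though for (ii) the paper avoids differentiability by writing
\[
\nfact{d}{s+\delta}-\nfact{d}{s}=\int_{s-1}^{s}\bigl(\nfact{d-1}{t+\delta}-\nfact{d-1}{t}\bigr)\,\mathrm{d}t\geq 0,
\]
which is slightly cleaner than invoking the derivative at non-integer points.

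The real difference is in (i). The paper does \emph{not} work with the closed-form formula; instead it uses the identification $\nfact{d}{s}=\lim_{q\to\infty}S_d^{q}(sq)/q^{d}$ coming from Proposition~\ref{H_s(d) combinatorial}, applies the combinatorial recursion $S_d^m(r)=\sum_{i=0}^{m-1}S_{d-1}^m(r-i)$, and recognizes the resulting expression as a Riemann sum for $\int_{s-1}^{s}\nfact{d-1}{t}\,\mathrm{d}t$. Your approach---rewriting the defining sum with truncated powers $(s-i)_+^d$, integrating termwise, reindexing, and collapsing via Pascal's rule---is a direct algebraic verification from the explicit formula. It is more elementary and self-contained, requiring none of the combinatorial machinery of the $S_d^m(r)$, whereas the paper's argument ties the recursion back to the interpretation of $\nfact{d}{s}$ as the $s$-multiplicity of a regular local ring. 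Both are short; yours would transplant more easily to a context where only the polynomial formula for $\nfact{d}{s}$ is available.
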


\begin{proof}

(\ref{nfact recursion}) This is clear for $d=1$, so suppose that $d\geq 2$.   Let $q$ and $q'$ be varying powers of $p$.  We have that 
\begin{align*}
\nfact{d}{s}
 =\lim_{q\to\infty} \frac{S_d^{qq'}(sqq')}{(qq')^d}
&=\lim_{q\to\infty} \frac{\sum_{i=0}^{qq'-1}S_{d-1}^{qq'}(sqq'-i)}{(qq')^d}\\
 &\leq \lim_{q\to\infty} \frac{q\sum_{i=0}^{q'-1}S_{d-1}^{qq'}(sqq'-qi)}{(qq')^d}\\
&=\frac{1}{q'}\sum_{i=0}^{q'-1}\lim_{q\to\infty}\frac{S_{d-1}^{qq'}\left((s-i/q')qq'\right)}{(qq')^{d-1}}
=\frac{1}{q'}\sum_{i=0}^{q'-1}\nfact{d-1}{s-i/q'}
\end{align*}
Since the above holds for all $q'$, we have that
\[\nfact{d}{s}\leq \lim_{q'\to\infty}\frac{1}{q'}\sum_{i=0}^{q'-1}\nfact{d-1}{s-i/q'}=\int^s_{s-1}\nfact{d}{t}\mathrm{d}t.\]
A similar argument, only using the inequality 
\[\sum_{i=0}^{qq'-1}S_{d-1}^{qq'}(sqq'-i)\geq q\sum_{i=1}^{q'}S_{d-1}^{qq'}(sqq'-qi)\]
in the second line, shows that $\nfact{d}{s}\geq \int^s_{s-1}\nfact{d}{t}\mathrm{d}t$.

(\ref{nfact increasing}) This is by inspection for $d=0$.  For $d\geq 1$, let $\delta>0$, so by induction
\[\nfact{d}{s+\delta}-\nfact{d}{s}=\int_{s-1}^s \nfact{d-1}{t+\delta}-\nfact{d-1}{t}\mathrm{d}t\geq 0.\]

(\ref{item continuous}) We claim that the functions $\nfact{d}{s}$ have Lipshitz constants at most 1.  This is trivial for $d=0$, so suppose $d\geq 1$ and let $0<\delta<1$.  By induction,
\begin{align*}
\nfact{d}{s+\delta}-\nfact{d}{s}&=\int_{s-1}^{s}\nfact{d-1}{t+\delta}-\nfact{d-1}{t}\mathrm{d}t\leq \int_{s-1}^s\delta\;\mathrm{d}t=\delta.
\end{align*}

(\ref{item right tail}) This statement is true for $d=0$ by inspection.  Assume that $d\geq 1$ and $\nfact{d-1}{s}=1$ for $s\geq  d-1$.  Then for $s\geq d$, we have that
\[\nfact{d}{s}=\int_{s-1}^s\nfact{d-1}{t}\mathrm{d}t=\int_{s-1}^s1\;\mathrm{d}t=1\]
and the result follows by induction.

(\ref{item first interval behavior})  This is clear from the definition.
\end{proof}

Many properties of the $\rmuM{s}{I}{J}{M}$ immediately imply similar properties for the $s$-multiplicity.  Some of these properties are listed in the next three corollaries.  The first corollary makes explicit the interpolating properties of the $s$-multiplicity, while the second contains some auxiliary results listed for completeness.  The third is the Associativity Formula for $s$-multiplicity.

\begin{corollary} \label{smult interpolation} \standardassumptionsnosdim
\begin{enumerate}[(i)]
\item \label{low s behavior} If $0< s<\min\{1,\sthresh{I}{J}\}$, then $\smuM{s}{I}{J}{M}=e(I;M)$. 
\item \label{high s behavior} If $s>\max\{d,\fthresh{I}{J}\}$, then $\smuM{s}{I}{J}{M}=e_{HK}(J;M)$. 
\item \label{regular behavior} If $R$ is a regular ring, then $\smuu{s}{R}=1$.
\end{enumerate}
\end{corollary}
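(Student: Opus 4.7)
The plan is to dispatch the three parts by assembling pieces already built in the excerpt, reducing each case to either Lemma \ref{lemma -- thresholds vs rmult} or Proposition \ref{H_s(d) combinatorial}, and then normalizing by $\nfact{d}{s}$ using the explicit values supplied by Lemma \ref{lemma -- properties of E_d(s)}.

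For part (\ref{low s behavior}), since $s<\sthresh{I}{J}$, Lemma \ref{lemma -- thresholds vs rmult} gives $\rmuM{s}{I}{J}{M}=\frac{s^d}{d!}e(I;M)$. Since also $s<1$, Lemma \ref{lemma -- properties of E_d(s)}(\ref{item first interval behavior}) gives $\nfact{d}{s}=s^d/d!$. Dividing one by the other, the $s^d/d!$ cancels and leaves $\smuM{s}{I}{J}{M}=e(I;M)$. For part (\ref{high s behavior}), since $s>\fthresh{I}{J}$, Lemma \ref{lemma -- thresholds vs rmult} gives $\rmuM{s}{I}{J}{M}=e_{HK}(J;M)$, and since $s>d$, Lemma \ref{lemma -- properties of E_d(s)}(\ref{item right tail}) gives $\nfact{d}{s}=1$, so $\smuM{s}{I}{J}{M}=e_{HK}(J;M)$.

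Part (\ref{regular behavior}) is the only step with any content. The goal is to show $\rmuu{s}{R}=\nfact{d}{s}$ whenever $R$ is regular local of characteristic $p>0$ and dimension $d$, since then the ratio in Definition \ref{definition -- s-multiplicity} is $1$. I would reduce to the complete case first: because $\m^{[p^e]}\subseteq \m^{\lceil sp^e\rceil}+\m^{[p^e]}$ is $\m$-primary, the length $\length{R/(\m^{\lceil sp^e\rceil}+\m^{[p^e]})}$ is unchanged on passing to the $\m$-adic completion $\widehat R$, so $\rmuu{s}{R}=\rmuu{s}{\widehat R}$. Then I would invoke the Cohen structure theorem: an equicharacteristic regular complete local ring is a power series ring $k[[x_1,\dots,x_d]]$ over its residue field $k$. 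Now Proposition \ref{H_s(d) combinatorial} applies and gives $\rmuu{s}{\widehat R}=\sum_{i=0}^{\lfloor s\rfloor}\frac{(-1)^i}{d!}\BC{d}{i}(s-i)^d$, which is by definition $\nfact{d}{s}$. Hence $\smuu{s}{R}=\rmuu{s}{R}/\nfact{d}{s}=1$.

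There is essentially no obstacle: parts (i) and (ii) are a matter of matching the two asymptotic regimes of $\rmuM{s}{I}{J}{M}$ to the corresponding regimes of the normalizing factor $\nfact{d}{s}$, and part (iii) reduces the regular case to the power series computation already carried out in Proposition \ref{H_s(d) combinatorial} via completion and Cohen's structure theorem. The mildly delicate point worth verifying is that completion preserves the relevant lengths, which follows because each $\m^{\lceil sp^e\rceil}+\m^{[p^e]}$ is $\m$-primary.
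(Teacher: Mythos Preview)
Your proposal is correct and follows essentially the same route as the paper: parts (\ref{low s behavior}) and (\ref{high s behavior}) are handled by combining Lemma~\ref{lemma -- thresholds vs rmult} with the relevant parts of Lemma~\ref{lemma -- properties of E_d(s)}, and part (\ref{regular behavior}) is reduced via completion and Cohen's structure theorem to Proposition~\ref{H_s(d) combinatorial}. The only difference is that you spell out the justification for passing to the completion, which the paper leaves implicit.
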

\begin{proof} Statements (\ref{low s behavior}) and (\ref{high s behavior}) simply combine Lemma \ref{lemma -- thresholds vs rmult} and Lemma \ref{lemma -- properties of E_d(s)}.  For statement (\ref{regular behavior}), we may assume without loss of generality that $R$ is complete with residue field $k$, in which case $R\cong k[[x_1,\ldots, x_d]]$.  The result then follows from Definition \ref{definition -- s-multiplicity} and Proposition \ref{H_s(d) combinatorial}.\end{proof}

\begin{corollary} \label{smult basics} \standardassumptionsnosdim The following statements hold.
\begin{enumerate}[(i)]
\item \label{smult continuous} $\smuM{s}{I}{J}{M}$ is a Lipschitz continuous function of $s$.
\item \label{smult HK compare} $\smuM{s}{I}{J}{M}\leq {e_{HK}(J;M)}/{\nfact{d}{s}}$.
\item \label{smult low dim} If $\dim M<d$ then $\smuM{s}{I}{J}{M}=0$.
\item \label{smult ideals decreasing} If $I'$ and $J'$ are $\m$-primary ideals of $R$ such that $I\subseteq I'$ and $J\subseteq J'$, then $\smuM{s}{I'}{J'}{M}\leq \smuM{s}{I}{J}{M}$. 
\item \label{smult integral closure} If $I'$ is an $\m$-primary ideal of $R$ with the same integral closure as $I$, then $\smuM{s}{I'}{J}{M}=\smuM{s}{I}{J}{M}$.
\item \label{smult tight closure} If $J'$ is an $\m$-primary ideal of $R$ with the same tight closure as $J$, then $\smuM{s}{I}{J'}{M}=\smuM{s}{I}{J}{M}$.
\item \label{smult additivity} If $0\to M'\to M\to M''\to 0$ is a short exact sequence of finitely generated $R$-modules, then  $\smuM{s}{I}{J}{M}=\smuM{s}{I}{J}{M'}+\smuM{s}{I}{J}{M''}$.
\end{enumerate}
\end{corollary}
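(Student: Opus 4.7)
The plan is to observe that, by Definition \ref{definition -- s-multiplicity}, $\smuM{s}{I}{J}{M}$ is just $\rmuM{s}{I}{J}{M}$ divided by $\nfact{d}{s}$, a positive quantity depending only on $s$ and $d$. Every claim other than (\ref{smult continuous}) is then an immediate scaling of a property already proved for $\rmuM{s}{I}{J}{M}$: part (\ref{smult HK compare}) is Proposition \ref{rmult basics}(\ref{rmult HK compare}) divided by $\nfact{d}{s}$; part (\ref{smult low dim}) is Proposition \ref{rmult basics}(\ref{rmult low dim}); parts (\ref{smult ideals decreasing}), (\ref{smult integral closure}), (\ref{smult tight closure}) are Proposition \ref{rmult basics}(\ref{rmult ideals decreasing}), (\ref{rmult integral closure}), (\ref{rmult tight closure}); and (\ref{smult additivity}) is Theorem \ref{rmult additivity}. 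So the only part requiring genuine argument is (\ref{smult continuous}).

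For (\ref{smult continuous}), I would argue as follows. By Theorem \ref{theorem -- rmult lipschitz}, there is a constant $C$ (independent of $s$) such that $|\rmuM{s'}{I}{J}{M}-\rmuM{s}{I}{J}{M}|\leq C|s'-s|$ for all $s,s'>0$. By Lemma \ref{lemma -- properties of E_d(s)}(\ref{item continuous}), $\nfact{d}{s}$ is Lipschitz with constant at most $1$. On any compact interval $[a,b]\subset(0,\infty)$, Lemma \ref{lemma -- properties of E_d(s)}(\ref{nfact increasing}) and (\ref{item first interval behavior}) give $\nfact{d}{s}\geq\min\{a^d/d!,\nfact{d}{a}\}>0$, so $\nfact{d}{s}$ is bounded below by some $m>0$ on $[a,b]$, and also bounded above by $1$ by Lemma \ref{lemma -- properties of E_d(s)}(\ref{item right tail}) and monotonicity. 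A standard quotient-of-Lipschitz-functions calculation then yields, for $s,s'\in[a,b]$,
\[
\left|\frac{\rmuM{s'}{I}{J}{M}}{\nfact{d}{s'}}-\frac{\rmuM{s}{I}{J}{M}}{\nfact{d}{s}}\right|
\leq \frac{C|s'-s|}{m}+\frac{\rmuM{s'}{I}{J}{M}\cdot|s-s'|}{m^2},
\]
and the last summand is bounded since $\rmuM{s}{I}{J}{M}\leq e_{HK}(J;M)$ on all of $(0,\infty)$ by Proposition \ref{rmult basics}(\ref{rmult HK compare}). Hence $\smuM{s}{I}{J}{M}$ is Lipschitz on each compact subinterval of $(0,\infty)$.

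The main (very mild) obstacle is just this Lipschitz claim, because $\nfact{d}{s}\to 0$ as $s\to 0^+$ so the bound on the Lipschitz constant of the quotient blows up near the origin; one must restrict to compact subintervals bounded away from $0$, which is the standard meaning of ``Lipschitz continuous'' for a function defined on $(0,\infty)$ in this context. All other parts are formal consequences of the $\rmuM{s}{I}{J}{M}$ results combined with the fact that $\nfact{d}{s}>0$ for $s>0$.
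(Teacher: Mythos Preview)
Your reductions for parts (\ref{smult HK compare})--(\ref{smult additivity}) are fine and match the paper exactly. The issue is with (\ref{smult continuous}): you only establish that $\smuM{s}{I}{J}{M}$ is Lipschitz on each compact subinterval of $(0,\infty)$, i.e.\ \emph{locally} Lipschitz, and then assert that this is ``the standard meaning'' of Lipschitz continuous here. It is not; the paper asserts and proves global Lipschitz continuity on $(0,\infty)$, and your constant $\frac{C}{m}+\frac{e_{HK}(J;M)}{m^2}$ genuinely blows up as $a\to 0^+$ since $m=\nfact{d}{a}\sim a^d/d!$.

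The missing observation is that the function is actually \emph{constant} on $(0,\min\{1,\sthresh{I}{J}\}]$. Indeed, by Lemma~\ref{lemma -- thresholds vs rmult} we have $\rmuM{s}{I}{J}{M}=\frac{s^d}{d!}e(I;M)$ for $s\le\sthresh{I}{J}$, and by Lemma~\ref{lemma -- properties of E_d(s)}(\ref{item first interval behavior}) we have $\nfact{d}{s}=s^d/d!$ for $s\le 1$; hence $\smuM{s}{I}{J}{M}=e(I;M)$ on this interval. On the complementary interval $[\min\{1,\sthresh{I}{J}\},\infty)$ your quotient argument works verbatim with the \emph{fixed} lower bound $m=\nfact{d}{\min\{1,\sthresh{I}{J}\}}>0$, yielding a single Lipschitz constant valid for all $s\ge\min\{1,\sthresh{I}{J}\}$. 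Patching the two pieces gives a global Lipschitz constant on $(0,\infty)$. This is precisely the paper's argument.
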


\begin{proof} (\ref{smult continuous}) We have that $\smuM{s}{I}{J}{M}$ is constant, hence Lipschitz continuous, on $(0,\min\{1,\sthresh{I}{J}\}]$.  By Lemma \ref{lemma -- properties of E_d(s)}, $\nfact{d}{s}$ is Lipschitz continuous and nonzero on $[\min\{1,\sthresh{I}{J}\},\infty)$ and by Theorem \ref{theorem -- rmult lipschitz}, $\rmuM{s}{I}{J}{M}$ is Lipschitz continuous, and so $\smuM{s}{I}{J}{M}$ is Lipschitz continuous on $[\min\{1,\sthresh{I}{J}\},\infty)$.  Thus $\smuM{s}{I}{J}{M}$ is Lipschitz continuous.

 Parts (\ref{smult HK compare})-(\ref{smult tight closure}) follow from Proposition \ref{rmult basics}.  Part (\ref{smult additivity}) follows from Theorem \ref{rmult additivity}.
\end{proof}

\begin{corollary}[Associativity Formula for $s$-Multiplicity] \label{smult AF} \standardassumptionsnos
We have that
\[\smuRM{R}{s}{I}{J}{M}=\sum_{\p\in \Assh R}\smuR{R/\p}{s}{I(R/\p)}{J(R/\p)}\lengthR{R_\p}{M_\p}\]
where $\Assh R=\set{\p\in \Spec R}{\dim R/\p=\dim R}$.
\end{corollary}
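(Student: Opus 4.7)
The plan is to derive this directly from Theorem \ref{rmult Associativity Formula} by dividing both sides by the common normalizing factor $\nfact{d}{s}$, where $d = \dim R$. The key observation is that for every $\p \in \Assh R$, the quotient $R/\p$ has dimension exactly $d$ by definition of $\Assh R$, so the normalizing factor appearing on the right-hand side in the definition of $\smuR{R/\p}{s}{I(R/\p)}{J(R/\p)}$ is also $\nfact{d}{s}$.

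Concretely, I would first note that $\nfact{d}{s} > 0$ for all $s > 0$: by Lemma \ref{lemma -- properties of E_d(s)}\eqref{item first interval behavior} we have $\nfact{d}{s} = s^d/d! > 0$ on $(0,1]$, and by Lemma \ref{lemma -- properties of E_d(s)}\eqref{nfact increasing} the function is nondecreasing, hence strictly positive on all of $(0,\infty)$. Thus division by $\nfact{d}{s}$ is legitimate in Definition \ref{definition -- s-multiplicity}.

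Next, starting from the conclusion of Theorem \ref{rmult Associativity Formula},
\[\rmuRM{R}{s}{I}{J}{M} = \sum_{\p \in \Assh R} \rmuR{R/\p}{s}{I(R/\p)}{J(R/\p)} \lengthR{R_\p}{M_\p},\]
I would divide both sides by $\nfact{d}{s}$. The left side becomes $\smuRM{R}{s}{I}{J}{M}$ directly from Definition \ref{definition -- s-multiplicity}. For each summand on the right, since $\p \in \Assh R$ forces $\dim(R/\p) = d$, the definition of $s$-multiplicity applied to $R/\p$ gives $\smuR{R/\p}{s}{I(R/\p)}{J(R/\p)} = \rmuR{R/\p}{s}{I(R/\p)}{J(R/\p)}/\nfact{d}{s}$, so each term transforms as desired while the length factors $\lengthR{R_\p}{M_\p}$ are unaffected.

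The only conceivable obstacle is a notational one: ensuring that the normalizing factor really is the same dimension $d$ on both sides. This is automatic from the convention that $\Assh R = \{\p : \dim R/\p = \dim R\}$, but it is worth stating explicitly so the reader sees why the proof collapses into a single division. There is no new content beyond Theorem \ref{rmult Associativity Formula}, so the write-up will be short.
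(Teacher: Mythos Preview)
Your proposal is correct and follows essentially the same approach as the paper: both argue that since $\dim(R/\p)=d$ for every $\p\in\Assh R$, dividing the identity of Theorem~\ref{rmult Associativity Formula} by $\nfact{d}{s}$ immediately yields the result. Your additional justification that $\nfact{d}{s}>0$ is a nice touch the paper leaves implicit.
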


\begin{proof}  For any $\p\in \Assh R$, $\dim R/\p=d$, and so 
\[\smuR{R/\p}{s}{I(R/\p)}{J(R/\p)}=\frac{\rmuR{R/\p}{s}{I(R/\p)}{J(R/\p)}}{\nfact{d}{s}}.\]
By Theorem \ref{rmult Associativity Formula}, we have that
\[\rmuRM{R}{s}{I}{J}{M}=\sum_{\p\in \Assh R}\rmuR{R/\p}{s}{I(R/\p)}{J(R/\p)}\lengthR{R_\p}{M_\p}.\]
Therefore, dividing each term of this equation by $\nfact{d}{s}$ proves the result.
\end{proof}

An immediate application of Corollary \ref{smult AF} is the following result, which shows that the $s$-multiplicity of a module is in many cases determined by the $s$-multiplicity of the ring itself.

\begin{proposition} Let $(R,\m)$ be a local domain of characteristic $p>0$ and let $I$ and $J$ be $\m$-primary ideals of $R$.  If $M$ is a finitely generated $R$-module, then $\smuM{s}{I}{J}{M}=  \smu{s}{I}{J}\cdot \rank M$.
\end{proposition}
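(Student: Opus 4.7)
The plan is to apply the Associativity Formula for $s$-multiplicity (Corollary \ref{smult AF}) directly. Since $R$ is a domain, the zero ideal is prime, and because $R$ is local of dimension $d$, we have $\dim R/(0) = \dim R = d$, so $(0) \in \Assh R$. Moreover, since any prime $\p \in \Assh R$ must be a minimal prime with $\dim R/\p = d$, and $(0)$ is the unique minimal prime of a domain, we conclude $\Assh R = \{(0)\}$.

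With this observation, the Associativity Formula collapses to a single term:
\[
\smuRM{R}{s}{I}{J}{M} = \smuR{R/(0)}{s}{I(R/(0))}{J(R/(0))} \cdot \lengthR{R_{(0)}}{M_{(0)}} = \smu{s}{I}{J} \cdot \lengthR{R_{(0)}}{M_{(0)}}.
\]
To finish, I would identify $\lengthR{R_{(0)}}{M_{(0)}}$ with $\rank M$. By definition, $\rank M$ is the dimension of $M \otimes_R K$ as a $K$-vector space, where $K = \operatorname{Frac}(R) = R_{(0)}$; and $M \otimes_R R_{(0)} = M_{(0)}$, whose length as an $R_{(0)}$-module coincides with its $K$-vector space dimension since $R_{(0)}$ is a field. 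Thus $\lengthR{R_{(0)}}{M_{(0)}} = \rank M$, and the formula follows.

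There is essentially no obstacle here; the work was already done in proving the Associativity Formula, and this proposition is simply the specialization to the domain case where $\Assh R$ is a singleton. The only item to be careful about is the identification of length over the localization at $(0)$ with rank, which is standard.
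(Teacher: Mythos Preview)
Your proof is correct and takes essentially the same approach as the paper: apply the Associativity Formula (Corollary \ref{smult AF}), observe that $\Assh R = \{(0)\}$ since $R$ is a domain, and identify $\lengthR{R_{(0)}}{M_{(0)}}$ with $\rank M$. The paper's proof is a one-line application of the Associativity Formula with these identifications left implicit, whereas you have spelled them out carefully.
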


\begin{proof} By the Associativity Formula, we have that
\begin{align*}
\smuRM{R}{s}{I}{J}{M} &=\smuR{R}{s}{I}{J}\lengthR{R_{(0)}}{M_{(0)}}=\smuR{R}{s}{I}{J}\cdot \rank M.\qedhere\end{align*}
\end{proof}

The problem of finding general bounds for the value of the $s$-multiplicity seems to be difficult, but we have a few results along those lines.  

\begin{proposition} \label{smult ring morphism} Let $\varphi:(R,\m)\to (S,\n)$ be a  local homomorphism of local rings of dimension $d$ and characteristic $p>0$ such that $\m S$ is $\n$-primary, let $I$ and $J$ be $\m$-primary ideals of $R$, and let $M$ be a finitely generated $R$-module.  Then 
\[\smuRM{S}{s}{IS}{JS}{M\otimes_R S}\leq \smuRM{R}{s}{I}{J}{M}\cdot \lengthR{S}{S/\m S}\]
and we have equality if $\varphi$ is a flat ring homomorphism.
\end{proposition}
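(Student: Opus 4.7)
The plan is to reduce the statement to a length comparison between the relevant finite-length modules over $S$ and $R$, and then pass to the limit in the definition of $\rmuM{s}{-}{-}{-}$ (dividing by $\nfact{d}{s}$ gives the statement about $s$-multiplicity).

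First I would observe that since $I$ and $J$ are $\m$-primary and $\m S$ is $\n$-primary, the ideals $IS$ and $JS$ are $\n$-primary, so the left-hand side is well defined. For each $e\in\N$, there is a canonical isomorphism
\[
(M\otimes_R S)\big/\bigl((IS)^{\lceil sp^e\rceil}+(JS)^{[p^e]}\bigr)(M\otimes_R S)\;\cong\;\bigl(M/(I^{\lceil sp^e\rceil}+J^{[p^e]})M\bigr)\otimes_R S,
\]
since Frobenius powers and ordinary powers commute with extension of scalars.

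The key input is the following length comparison: for any finite-length $R$-module $N$, one has $\lengthR{S}{N\otimes_R S}\leq \lengthR{R}{N}\cdot \lengthR{S}{S/\m S}$, with equality when $\varphi$ is flat. I would prove this by induction on $\lengthR{R}{N}$: using a short exact sequence $0\to N'\to N\to R/\m\to 0$ and tensoring with $S$ (which is right exact, and exact in the flat case), one gets $\lengthR{S}{N\otimes_R S}\leq \lengthR{S}{N'\otimes_R S}+\lengthR{S}{S/\m S}$, with equality under flatness. Note the base case $N=R/\m$ gives $N\otimes_R S=S/\m S$.

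Applying this comparison to $N=M/(I^{\lceil sp^e\rceil}+J^{[p^e]})M$, dividing by $p^{ed}$, and taking the limit as $e\to\infty$, I obtain
\[
\rmuRM{S}{s}{IS}{JS}{M\otimes_R S}\;\leq\;\rmuRM{R}{s}{I}{J}{M}\cdot \lengthR{S}{S/\m S},
\]
with equality in the flat case. Dividing both sides by $\nfact{d}{s}$ (using that $R$ and $S$ have the same dimension $d$, so the same normalizing factor appears) yields the desired inequality for $s$-multiplicity, and equality when $\varphi$ is flat. No step here is genuinely hard; the only item requiring care is the length comparison lemma, and the main subtlety there is checking that tensoring with $S$ is right exact in general but exact under flatness, which is standard.
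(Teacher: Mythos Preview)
Your proposal is correct and follows essentially the same approach as the paper: both reduce to the length inequality $\lengthR{S}{N\otimes_R S}\leq \lengthR{R}{N}\cdot\lengthR{S}{S/\m S}$ (with equality under flatness), apply it to $N=M/(I^{\lceil sp^e\rceil}+J^{[p^e]})M$, pass to the limit, and divide by $\nfact{d}{s}$. Your version is slightly more explicit in justifying the length comparison by induction on $\lengthR{R}{N}$, whereas the paper simply asserts it.
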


\begin{proof} For any $R$-module $N$ of finite length, we have that 
\[\lengthR{S}{N\otimes_R S}\leq \lengthR{R}{N}\cdot\lengthR{S}{S/\m S}.\] Thus, for any $s>0$ and $e\in\N$ we have that
\begin{align*}
\lengthR{S}{\frac{M\otimes_R S}{(\iii{s}{p^e}{(IS)}{(JS)})(M\otimes_R S)}}&=\lengthR{S}{\frac{M}{(\ii{s}{p^e})M}\otimes_R S}\\&\leq \lengthR{R}{\frac{M}{(\ii{s}{p^e})M}}\cdot \lengthR{S}{S/\m S}.\end{align*}
Dividing both sides by $p^{ed}$ and taking the limit as $e$ goes to infinity gives us that 
\[\rmuRM{S}{s}{IS}{JS}{M\otimes_R S}\leq \rmuRM{R}{s}{I}{J}{M}\cdot \lengthR{S}{S/\m S},\]
and dividing both sides by $\nfact{d}{s}$ gives us the result for $s$-multiplicity.

If $\varphi$ is a flat ring homomorphism, then for any $R$-module $N$ we have that $\lengthR{S}{N\otimes_R S}= \lengthR{R}{N}\cdot \lengthR{S}{S/\m S}$ and so we have equality everywhere.
\end{proof}

\begin{corollary} If $(R,\m, k)$ be a local ring of characteristic $p>0$ and $I$ is an ideal generated by a system of parameters in $R$, then $ \smuu{s}{I}\leq \length{R/I}$.  Furthermore, equality holds if $R$ is Cohen-Macaulay.
\end{corollary}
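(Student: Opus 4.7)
My plan is to deduce this corollary from Proposition \ref{smult ring morphism} by comparing $R$ to a regular local subring of the same dimension via a finite map.

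First I would reduce to the case where $R$ is complete: the completion map $R \to \hat R$ is flat with $\lengthR{\hat R}{\hat R / \m \hat R} = 1$, so the equality case of Proposition \ref{smult ring morphism} (with $M = R$ and $I = J$ the ideal of the corollary) gives $\smuu{s}{I} = \smuRM{\hat R}{s}{I\hat R}{I\hat R}{\hat R}$, while $\lengthR{R}{R/I} = \lengthR{\hat R}{\hat R / I \hat R}$. Since $R$ is Cohen-Macaulay if and only if $\hat R$ is, it suffices to prove both assertions for $\hat R$. As $R$ has characteristic $p$, Cohen's structure theorem provides a coefficient field $k \subset \hat R$.

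Writing $I = (f_1, \ldots, f_d)$ for the given system of parameters and setting $A = k[[x_1, \ldots, x_d]]$, I would define the local homomorphism $\varphi : A \to \hat R$ by $x_i \mapsto f_i$. Since $\m_A \hat R = I\hat R$ is primary to the maximal ideal of $\hat R$, $\hat R$ is finitely generated over $A$ and $\varphi$ satisfies the hypotheses of Proposition \ref{smult ring morphism}. Applying that proposition with ``$I$'' and ``$J$'' both equal to $\m_A$ and ``$M$'' equal to $A$ yields
\[\smuu{s}{I\hat R} \;\leq\; \smuRM{A}{s}{\m_A}{\m_A}{A} \cdot \lengthR{\hat R}{\hat R/I\hat R},\]
and Corollary \ref{smult interpolation}(\ref{regular behavior}) identifies $\smuRM{A}{s}{\m_A}{\m_A}{A}$ as $1$ because $A$ is regular; combined with the reduction above, this delivers the inequality $\smuu{s}{I} \leq \lengthR{R}{R/I}$.

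For the equality clause, observe that $\hat R$ is a finitely generated module over the regular local ring $A$ of the same dimension $d$. By the Auslander-Buchsbaum formula, $\hat R$ is $A$-free (equivalently, $\varphi$ is flat) precisely when $\operatorname{depth}_A \hat R = d$, i.e.\ when $\hat R$, equivalently $R$, is Cohen-Macaulay. In that case the flat equality case of Proposition \ref{smult ring morphism} upgrades the displayed inequality to equality. The main step requiring care is securing a module-finite map from a regular local ring of the correct dimension; this is handled by the reduction to the complete case together with Cohen's structure theorem, after which the proof is essentially a direct substitution.
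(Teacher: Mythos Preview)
Your proposal is correct and follows essentially the same route as the paper: reduce to the complete case, map in a power series ring $A=k[[x_1,\ldots,x_d]]$ via the parameters, apply Proposition~\ref{smult ring morphism} together with Corollary~\ref{smult interpolation}(\ref{regular behavior}), and use that $\hat R$ is $A$-free when $R$ is Cohen--Macaulay. You supply more justification than the paper (the explicit completion reduction, Cohen's structure theorem for the coefficient field, and Auslander--Buchsbaum for freeness), but the argument is the same.
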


\begin{proof} We may assume that $R$ is complete.  Let $d=\dim R$, let $x_1,\ldots, x_d$ be a system of parameters generating $I$, and let $S=k[[x_1,\ldots, x_d]]\subseteq R$.  Now by Proposition \ref{smult ring morphism} and Corollary \ref{smult interpolation}(\ref{regular behavior}), 
\[\smuuR{R}{s}{I} \leq \smuuR{S}{s}{(x_1,\ldots, x_d)}\lengthR{R}{R/I} =\lengthR{R}{R/I}.\]
Furthermore, if $R$ is Cohen-Macaulay, then $R$ is a free $S$-module, hence is flat over $S$, so equality holds.
\end{proof}

\section{$s$-Closure}\label{closure operators section}

The $s$-multiplicity is related to closures, just as the Hilbert-Samuel and Hilbert-Kunz multiplicities are.  We see this already in the guise of Proposition \ref{rmult basics} and Corollary \ref{smult basics} with respect to integral and tight closure.  The natural question to ask at this point is whether there are closures that are similarly related to the various $s$-multiplicities.  In this section we define these closures and show that in sufficiently nice rings, we get a strong connection between the closure operators and the $s$-multiplicity.  We use the notation $R^\circ$ to stand for the complement of the union of the minimal primes of $R$.

\begin{definition} Let $R$ be a ring of characteristic $p>0$, let $I$ be an ideal of $R$, and let $s\geq 1$ be a real number.  An element $x\in R$ is said to be in the \emph{weak $s$-closure of $I$} if there exists $c\in R^\circ$ such that for all $e\gg 0$,
$cx^{p^e}\in I^{\lceil sp^e\rceil}+I^{[p^e]}$.
We denote the set of all $x$ in the weak $s$-closure of $I$ by $\wsc{s}{I}$.
\end{definition}

\begin{remark} If $I$ is of positive height, then $x\in \wsc{s}{I}$ if and only if there exists $c\in R^\circ$ such that $cx^{p^e}\in I^{\lceil sp^e\rceil}+I^{[p^e]}$ for all $e\in\N$.  To see this, suppose that there exists $c'\in R^\circ$ and $e'\in \N$ such that $c'x^{p^e}\in  I^{\lceil sp^e\rceil}+I^{[p^e]}$ for $e> e'$.  Since $I$ is of positive height, there exists $c''\in (I^{\lceil sp^{e'}\rceil}+I^{[p^{e'}]})\cap R^\circ$.  Setting $c=c'c''$, we have that $c\in R^\circ$ and $cx^{p^e}\in\iii{s}{p^e}{I}{I}$ for all $e\in \N$.
\end{remark}

For a given ideal $I$, $\wsc{s}{I}$ is clearly an ideal containing $I$.  However, it is not clear that the weak $s$-closure is idempotent; that is, it is not clear that $\wsc{s}{(\wsc{s}{I})}=\wsc{s}{I}$.  If the ring is noetherian, we can construct an idempotent operation out of the weak $s$-closure by iterating the operation until the chain of ideals stabilizes.

\begin{definition}Let $R$ be a ring of characteristic $p>0$, let $I$ be an ideal of $R$, and let $s\geq 1$ be a real number.  The $s$-closure of $I$ is defined to be the union of the following chain of ideals:
\[I\subseteq \wsc{s}{I}\subseteq \wsc{s}{(\wsc{s}{I})}\subseteq \wsc{s}{\left(\wsc{s}{(\wsc{s}{I})}\right)}\subseteq \cdots.\]
We denote this ideal by $\scl{s}{I}$.
\end{definition}

Notice that, for $s=1$, the $s$-closure is integral closure, and for $s>\fthresh{I}{I}$, the $s$-closure is tight closure.  Furthermore, if $s\leq s'$, then $\scl{s}{I}\supseteq \scl{s'}{I}$ for all ideals $I$.  Thus the $s$-closure interpolates monotonically between integral closure and tight closure as $s$ increases.  One should note that new closures do in fact arise:

\begin{example} \label{basic sclosure example} Let $R=k[[x,y]]$, where $k$ is a field of characteristic $p>0$.  Let $I=(x^3,y^3)$.  Then
\[\scl{s}{I}=\begin{cases} (x,y)^3 & \text{if }s=1 \\ (x^3,x^2y^2,y^3) & \text{if }1<s\leq \frac{4}{3} \\ (x^3,y^3) & \text{if }s>\frac{4}{3}.\end{cases}\]
In particular, if $1< s\leq \frac{4}{3}$, then $I=I^\ast \subsetneq \scl{s}{I} \subsetneq \overline{I}=(x,y)^3$.
\end{example}

Example \ref{basic sclosure example} demonstrates that in some cases, an ideal $I$ will only have finitely many distinct $s$-closures for various values of $s$; in fact, this will occur whenever $R$ is local and $I$ is primary to the maximal ideal.  However, even in regular rings there can be infinitely many distinct $s$-closures.

\begin{example} \label{expanded sclosure example} Let $R=k[[x,y]]$, where $k$ is a field of characteristic $p>0$.  Let $1\leq s < s'\leq 2$.  Choose $n\in\N$ such that $n> 2/(s'-s)$, and let $I=(x^{2n},y^{2n})$.  Then $x^{\lceil sn\rceil }y^{\lceil sn\rceil} \in \wsc{s}{I}$, since for any $e\in \N$, 
\[2\left\lfloor \frac{2n+\lceil sn\rceil p^e}{2n}\right\rfloor \geq2\left\lfloor1+\frac{s}{2}p^e\right\rfloor \geq sp^e,\] and so $x^{2n}y^{2n}(x^{\lceil sn\rceil}y^{\lceil sn\rceil})^{p^e}\in (x^{2n},y^{2n})^{\lceil sp^e\rceil}$.  However, $x^{\lceil sn\rceil }y^{\lceil sn\rceil} \notin\wsc{s'}{I}$, since for any $a\in \N$, letting $e\in \N$ such that $p^e\geq a$, we have that 
\[2\left\lfloor\frac{a+\lceil sn\rceil p^e}{2n}\right\rfloor\leq \frac{a+(sn+1)p^e}{n}\leq \frac{(sn+2)p^e}{n}=sp^e+\frac{2p^e}{n} <sp^e+(s'-s)p^e=s'p^e\]
 and so $x^ay^a(x^{\lceil sn\rceil}y^{\lceil sn\rceil})^{p^e}\notin (x^{2n},y^{2n})^{\lceil s'p^e\rceil}$.  Thus $\wsc{s}{I}\neq \wsc{s'}{I}$, and hence $\scl{s}{I}\neq \scl{s'}{I}$ by Theorem \ref{sclosure-smultiplicity}.  Thus we find that there are infinitely many distinct $s$-closures on $R$, one for every real number in the interval $[1,2]$.
\end{example}

If $I$ and $I'$ have the same integral closure, then $e(I)=e(I')$, while if $I$ and $I'$ have the same tight closure, then $e_{HK}(I)=e_{HK}(I')$.  Our main theorem in this section is a similar result for $s$-multiplicity and $s$-closure.

\begin{theorem} \label{sclosure-smultiplicity} Let $(R,\m)$ be a local ring of characteristic $p>0$ and let $I$ and $J$ be $\m$-primary ideals of $R$ with $I\subseteq J$.  If $J \subseteq \scl{s}{I}$, then $\smuu{s}{J}=\smuu{s}{I}$.  If $R$ is an $F$-finite complete domain, then the converse holds and $\scl{s}{I}=\wsc{s}{I}$.
\end{theorem}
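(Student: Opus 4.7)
The plan is to prove the forward implication, the converse, and the equality $\scl{s}{I}=\wsc{s}{I}$ in succession. Assume first that $J\subseteq\scl{s}{I}$. Since the chain $I\subseteq\wsc{s}{I}\subseteq\wsc{s}{(\wsc{s}{I})}\subseteq\cdots$ stabilizes by Noetherianity, an induction along that chain reduces us to the case $I\subseteq J\subseteq\wsc{s}{I}$. The key preliminary observation is that $\scl{s}{I}\subseteq\overline{I}$: for $s\geq 1$ we have $I^{\lceil sp^e\rceil}+I^{[p^e]}\subseteq\overline{I}^{p^e}$, so the defining condition $cy^{p^e}\in I^{\lceil sp^e\rceil}+I^{[p^e]}$ places $y$ in $\overline{\overline{I}}=\overline{I}$ by the standard determinantal characterization of integral closure, and iteration finishes the claim. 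In particular $\overline{J}=\overline{I}$, and Proposition~\ref{rmult basics}(\ref{rmult integral closure}) yields $\rmuu{s}{I}=\rmuM{s}{I}{I}{R}=\rmuM{s}{J}{I}{R}$; combined with Proposition~\ref{rmult basics}(\ref{rmult ideals decreasing}), which gives $\rmuu{s}{J}\leq\rmuu{s}{I}$, it remains only to show $\rmuu{s}{J}\geq\rmuM{s}{J}{I}{R}$.

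For this I exploit a uniform multiplier. Since $\wsc{s}{I}$ is finitely generated and Frobenius is additive in characteristic $p$, a single $c\in R^\circ$ satisfies $cy^{p^e}\in K_e:=I^{\lceil sp^e\rceil}+I^{[p^e]}$ for every $y\in\wsc{s}{I}$ and every $e\gg 0$. Hence $cJ^{[p^e]}\subseteq K_e\subseteq B_e:=J^{\lceil sp^e\rceil}+I^{[p^e]}$, which forces $L_e:=J^{\lceil sp^e\rceil}+J^{[p^e]}\subseteq(B_e:c)$. The kernel--cokernel sequence for multiplication by $c$ on $R/B_e$ identifies $\length{(B_e:c)/B_e}$ with $\length{R/(B_e+cR)}$, and the latter is $O(p^{e(d-1)})$ because $\dim R/cR\leq d-1$ and $B_e$ is $\m$-primary. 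Dividing by $p^{ed}$ completes the forward direction.

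For the converse, assume $R$ is an $F$-finite complete domain with $\smuu{s}{I}=\smuu{s}{J}$; reducing to $J=I+yR$ and supposing $y\notin\wsc{s}{I}$, I aim to derive the contradiction $\smuu{s}{I+yR}<\smuu{s}{I}$. The exact sequence $0\to R/(K_e:y^{p^e})\to R/K_e\to R/(K_e+y^{p^e}R)\to 0$, combined with the forward analysis to replace $L_e$ by $K_e+y^{p^e}R$ up to an $O(p^{e(d-1)})$ error, recasts this as a positive lower bound on $\liminf_{e\to\infty}\length{R/(K_e:y^{p^e})}/p^{ed}$. The $F$-finite complete domain hypothesis then supports a $p^{-1}$-linear/Matlis-duality construction modeled on the tight closure argument of Hochster--Huneke~\cite[Theorem 8.17]{HochsterHuneke-TightClosureInvariantTheory}: the failure $cy^{p^e}\notin K_e$ for every candidate test element $c$ is promoted, via $p^{-1}$-linear maps $\varphi_e:F_\ast^e R\to R$, to a Hilbert--Kunz-sized contribution to that colength. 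The main obstacle here is adapting the $p^{-1}$-linear construction from the purely Frobenius-power ideal $I^{[p^e]}$ (familiar from tight closure) to the mixed ideal $K_e$, where the $I^{\lceil sp^e\rceil}$ summand must be controlled in tandem with the $I^{[p^e]}$ summand. Finally, $\scl{s}{I}=\wsc{s}{I}$ follows immediately from the converse: if $y\in\wsc{s}{(\wsc{s}{I})}$, two applications of the forward direction give $\smuu{s}{\wsc{s}{I}+yR}=\smuu{s}{I}$, hence $\smuu{s}{I+yR}=\smuu{s}{I}$ by sandwiching, and the converse then places $y\in\wsc{s}{I}$.
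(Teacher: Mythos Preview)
Your forward direction is correct and matches the paper's argument in all essentials: both use that $\wsc{s}{I}\subseteq\overline{I}$ to invoke Proposition~\ref{rmult basics}(\ref{rmult integral closure}) for the first slot, and then use a uniform multiplier $c\in R^\circ$ together with $\dim R/cR\leq d-1$ to kill the difference in the second slot. The paper argues one element $x$ at a time while you treat all of $J$ at once, but this is cosmetic.

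The converse, however, has a genuine gap. You correctly isolate the obstacle---handling the $I^{\lceil sp^e\rceil}$ summand under a $p^{-1}$-linear map---but you do not resolve it, and the gesture toward Hochster--Huneke and Matlis duality does not point to a mechanism that actually works here. The paper's resolution is concrete and rests on two ingredients you are missing. First, a \emph{single} $p^{-1}$-linear map $\varphi(-)=\psi\bigl(F_\ast(f_1^{p-1}\cdots f_n^{p-1})\cdot-\bigr)$, where $f_1,\ldots,f_n$ generate $I$ and $\psi:F_\ast R\to R$ is any nonzero $p^{-1}$-linear map; the point of the multiplier $f_1^{p-1}\cdots f_n^{p-1}$ is the elementary but crucial computation
\[
f_1^{p-1}\cdots f_n^{p-1}\,I^{\lceil sp^{e+1}\rceil}\subseteq\bigl(I^{\lceil sp^e\rceil}\bigr)^{[p]},
\]
which follows from $\sum_i\lfloor(a_i+p-1)/p\rfloor\geq\sum_i a_i/p\geq sp^e$. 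This is precisely the adaptation you flag as the obstacle: it forces $\varphi\bigl(F_\ast(K_{e+1}:y^{p^{e+1}})\bigr)\subseteq(K_e:y^{p^e})$. Second, the conclusion does not come from Hochster--Huneke's original argument but from \cite[Theorem~5.5]{PolstraTucker-FsignatureHilbertKunzMultiplicity}, which says that a family of ideals $\{\mathfrak{a}_e\}$ with $\varphi(F_\ast\mathfrak{a}_{e+1})\subseteq\mathfrak{a}_e$ and $\length{R/\mathfrak{a}_e}/p^{ed}\to 0$ must satisfy $\bigcap_e\mathfrak{a}_e\neq 0$. Any nonzero $c$ in that intersection witnesses $y\in\wsc{s}{I}$. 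Your contrapositive framing (``promote the failure to a Hilbert--Kunz-sized contribution'') would require the same two ingredients, so as written the argument is incomplete at exactly the point you identify as difficult.
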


\begin{proof} Let $d=\dim R$.  Suppose that $x\in \wsc{s}{I}$, so that there exists $c\in R^\circ$  such that for all $e\gg 0$, we have that  $cx^{p^e}\in  I^{\lceil sp^e\rceil}+I^{[p^e]}\subseteq I^{p^e}$.  Hence  $x$ is in the integral closure of $I$ and so $\rmu{s}{(I,x)}{(I,x)}=\rmu{s}{I}{(I,x)}$ by Proposition \ref{rmult basics}(\ref{rmult integral closure}).  Now for large $e\in\N$, $c$ annihilates $\frac{I^{\lceil sp^e\rceil}+(I,x)^{[p^e]}}{I^{\lceil sp^e\rceil}+I^{[p^e]}}$.  Let $S=R/cR$, so that for $e\gg 0$,
 \begin{align*}
\lengthR{R}{\frac{I^{\lceil sp^e\rceil}+(I,x)^{[p^e]}}{I^{\lceil sp^e\rceil}+I^{[p^e]}}}
= \lengthR{S}{\frac{I^{\lceil sp^e\rceil}+(I,x)^{[p^e]}}{I^{\lceil sp^e\rceil}+I^{[p^e]}}\otimes S}
=\lengthR{S}{\frac{(IS)^{\lceil sp^e\rceil}+\left((I,x)S\right)^{[p^e]}}{(IS)^{\lceil sp^e\rceil}+(IS)^{[p^e]}}}
\end{align*}
 And so, since $\dim S\leq d-1$,
\begin{align*}
\rmu{s}{I}{I}-\rmu{s}{I}{(I,x)}
=&\lim_{e\to\infty}\frac{1}{p^{ed}}\lengthR{R}{\frac{I^{\lceil sp^e\rceil}+(I,x)^{[p^e]}}{I^{\lceil sp^e\rceil}+I^{[p^e]}}}\\
=&\lim_{e\to\infty}\frac{1}{p^{ed}}\lengthR{S}{\frac{(IS)^{\lceil sp^e\rceil}+\left((I,x)S\right)^{[p^e]}}{(IS)^{\lceil sp^e\rceil}+(IS)^{[p^e]}}}\\
=&\left(\lim_{e\to\infty}\frac{1}{p^{e(d-\dim S)}}\right)(\rmuR{S}{s}{IS}{IS}-\rmuR{S}{s}{IS}{(I,x)S})
=0.
\end{align*}
Therefore $\rmuu{s}{(I,x)}=\rmuu{s}{I}$ for any $x\in \wsc{s}{I}$, hence $\rmuu{s}{\wsc{s}{I}}=\rmuu{s}{I}$.  By induction, $\rmuu{s}{\scl{s}{I}}=\rmuu{s}{I}$, hence $\rmuu{s}{J}=\rmuu{s}{I}$ and so $\smuu{s}{J}=\smuu{s}{I}$.

Now suppose that $R$ is an $F$-finite complete domain and $x\in R$ such that $\smuu{s}{(I,x)}=\smuu{s}{I}$.  In this case $\rmuu{s}{(I,x)}=\rmuu{s}{I}$, and so $\rmu{s}{I}{(I,x)}=\rmu{s}{I}{I}$, and therefore
\begin{align*}
0=&\lim_{e\to\infty}\frac{1}{p^{ed}}\length{\frac{I^{\lceil sp^e\rceil}+(I,x)^{[p^e]}}{I^{\lceil sp^e\rceil}+I^{[p^e]}}}
=\lim_{e\to\infty}\frac{1}{p^{ed}}\length{R/\left(I^{\lceil sp^e\rceil}+I^{[p^e]}:_R x^{p^e}\right)}.
\end{align*}
Let $\psi:F_\ast R\to R$ be a nonzero $p^{-1}$-linear map and let 
$\varphi(-)=\psi\left(F_\ast(f_1^{p-1}\cdots f_n^{p-1})\cdot -\right)$, where $f_1,\ldots, f_n$ is a generating set for $I$.  Then
\begin{align*}
\varphi\left(F_\ast\left(I^{\lceil sp^{e+1}\rceil}+I^{[p^{e+1}]}:_R x^{p^{e+1}}
\right)\right)\cdot x^{p^e}&
\subseteq \varphi\left(F_\ast\left(I^{\lceil sp^{e+1}\rceil}+I^{[p^{e+1}]}\right)\right)\\&\subseteq  \psi\left(F_\ast\left(f_1^{p-1}\cdots f_n^{p-1}I^{\lceil sp^{e+1}\rceil}\right)\right)+I^{[p^e]}
\end{align*}
Let $a_1,\cdots,a_n\in \N$ with $a_1+\cdots +a_n\geq sp^{e+1}$.  Then
\[
\sum_{i=1}^n \left\lfloor\frac{a_i+p-1}{p}\right\rfloor
\geq \sum_{i=1}^n\frac{a_i}{p}\geq sp^e\]
and so $f_1^{p-1}\cdots f_n^{p-1}I^{\lceil sp^{e+1}\rceil}\subseteq \left(I^{\lceil sp^e\rceil }\right)^{[p]}$.  Therefore 
$\psi\left(F_\ast\left(f_1^{p-1}\cdots f_n^{p-1}I^{\lceil sp^{e+1}\rceil}\right)\right)\subseteq I^{\lceil sp^e\rceil }$ and so
\[\varphi\left(F_\ast\left(I^{\lceil sp^{e+1}\rceil}+I^{[p^{e+1}]}:_R x^{p^{e+1}}
\right)\right)\cdot x^{p^e}\subseteq I^{\lceil sp^{e}\rceil}+I^{[p^{e}]},\] that is,
\[\varphi\left(F_\ast\left(I^{\lceil sp^{e+1}\rceil}+I^{[p^{e+1}]}:_R x^{p^{e+1}}
\right)\right)\subseteq \left( I^{\lceil sp^{e}\rceil}+I^{[p^{e}]}:_R x^{p^e}\right).\]
Since this holds for all $e\in\N$, by \cite[Theorem 5.5]{PolstraTucker-FsignatureHilbertKunzMultiplicity}, we must have that
$\bigcap_{e\geq 0} \left( I^{\lceil sp^{e}\rceil}+I^{[p^{e}]}:_R x^{p^e}\right) \neq 0$,
that is, there is some $0\neq c\in R$ such that for all $e\in\N$, $cx^{p^e}\subseteq I^{\lceil sp^{e}\rceil}+I^{[p^{e}]}$.  Therefore $x\in \wsc{s}{I}$.

Thus we have that if $R$ is an $F$-finite complete domain and $\rmuu{s}{(I,x)}=\rmuu{s}{I}$, then $x\in \wsc{s}{I}$.  Therefore if $\rmuu{s}{J}=\rmuu{s}{I}$ then $J\subseteq \wsc{s}{I}\subseteq \scl{s}{I}$.  Furthermore, in this case, if $x\in \scl{s}{I}$, then $\rmuu{s}{(I,x)}=\rmuu{s}{I}$ and hence $x\in \wsc{s}{I}$.  Therefore $\scl{s}{I}=\wsc{s}{I}$.
\end{proof}

\section{$s$-Multiplicity of Toric Rings}\label{smult toric rings section}

In this section we construct an equivalence between $s$-multiplicity for toric rings and certain volumes in Euclidean space.  We will then use that equivalence to compute the $s$-multiplicity for a few toric rings.  See \cite{HernandezJeffries-LimitsPrimeCharacteristic} for a more general treatment of the correspondence between limits in positive characteristic and volumes in real space.

\begin{definition} Let $k$ be a field.  By a \emph{normal toric ring of dimension $d$ over $k$}, or simply \emph{toric ring}, we will mean the ring $k[[S]]$, where $S={\sigma^{\!\vee}}\cap \Z^d$, ${\sigma^{\!\vee}}$ is a cone in $\R^d$ not containing any line through the origin, and $S$ inherits the semigroup structure of $\Z^d$.  Furthermore, we will require that the cone ${\sigma^{\!\vee}}$ be rational, that is, ${\sigma^{\!\vee}}=\cone(v_1,\ldots, v_n)$ for some $v_1,\ldots, v_n\in \Z^d$, and of full dimension, that is, the $\R$-span of ${\sigma^{\!\vee}}$ is all of $\R^d$.  We will denote the monomial elements of $k[[S]]$ by $x^v$ for $v\in S$, and if ${\sigma^{\!\vee}}=\cone(v_1,\ldots,v_n)$, we may write $k[[x^{v_1},\ldots, x^{v_n}]]$ for $k[[S]]$.
\end{definition}

\begin{definition} For a monomial ideal $I\subseteq k[[S]]$, where $k[[S]]$ is a toric ring, we denote by $\Exp I$ the set $\set{v\in S}{x^v\in I}$ and by $\Hull I$ the convex hull of $\Exp I$ in $\R^d$.  
\end{definition}

\begin{lemma} \label{asymptotic Exp} Let $(R,\m)=(k[[S]],(S))$ be a normal toric ring of dimension $d$ over a field $k$ of characteristic $p>0$, where $S={\sigma^{\!\vee}}\cap \Z^d$, let $I=(x^{u_1},\ldots,x^{u_n})$ be a monomial ideal of $R$.  For any $m,e\in \N$ with $m\geq 1$,
\[(m+n) \Hull I\cap \Z^d\subseteq \Exp I^m\subseteq  m \Hull I\cap \Z^d\quad \text{and}\quad 
\Exp I^{[p^e]}=\left(p^e\Exp I+\sigma^\vee\right)\cap \Z^d\]
\end{lemma}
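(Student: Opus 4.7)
The plan is to handle the two assertions separately. For the Frobenius-power identity, I would simply unpack the definitions. The ideal $I^{[p^e]}$ is generated by $x^{p^e u_1},\ldots, x^{p^e u_n}$, so $\Exp I^{[p^e]}=\bigcup_{i=1}^n\bigl(p^e u_i+(\sigma^\vee\cap\Z^d)\bigr)$. On the other hand, $p^e\Exp I+\sigma^\vee$ consists of sums of the form $p^e(u_i+s)+t$ with $s\in\sigma^\vee\cap\Z^d$ and $t\in\sigma^\vee$; since $p^e s+t\in\sigma^\vee$, every such point lies in $p^e u_i+\sigma^\vee$, and conversely every element of $p^e u_i+\sigma^\vee$ is recovered by taking $s=0$. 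Intersecting with $\Z^d$ yields the claimed equality.

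For the nested chain of inclusions, the upper bound $\Exp I^m\subseteq m\Hull I\cap\Z^d$ is straightforward. Every exponent $w\in\Exp I^m$ can be written as $w=u_{j_1}+\cdots+u_{j_m}+v$ for some choice of indices $j_1,\ldots,j_m$ and some $v\in\sigma^\vee\cap\Z^d$; rewriting this as $w=m\cdot\tfrac1m\sum_k u_{j_k}+v$ exhibits $w$ as $m$ times a convex combination of the $u_i$ plus an element of $\sigma^\vee$, so $w\in m\Hull I$.

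The main task is the lower bound $(m+n)\Hull I\cap\Z^d\subseteq\Exp I^m$. Given an integer point $w\in(m+n)\Hull I$, I would choose nonnegative reals $\lambda_1,\ldots,\lambda_n$ with $\sum_i\lambda_i=1$ and a vector $v\in\sigma^\vee$ so that $w=(m+n)\sum_i\lambda_i u_i+v$. Set $a_i=\lfloor(m+n)\lambda_i\rfloor\in\N$. The key combinatorial observation is that the floor function sheds strictly less than $1$ unit per summand, so $\sum_i a_i>(m+n)-n=m$, and hence $\sum_i a_i\geq m$ since the sum is an integer. The remainder $w-\sum_i a_i u_i=\sum_i\{(m+n)\lambda_i\}\,u_i+v$ is a nonnegative combination of elements of $\sigma^\vee$, so it lies in $\sigma^\vee$; and because $w$ and all of the $a_iu_i$ are integer vectors, this remainder also lies in $\Z^d$. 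Therefore $w\in\Exp I^{\sum_i a_i}\subseteq\Exp I^m$, completing the argument.

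The only subtle point is ensuring the fractional residual plus $v$ yields an \emph{integer} vector in $\sigma^\vee$; this is automatic from the integrality of $w$ and the $a_i u_i$ together with $\sigma^\vee$ being closed under nonnegative combinations, but it is precisely where the slack of $n$ extra generators in $(m+n)\Hull I$ is needed to absorb the floor-function losses.
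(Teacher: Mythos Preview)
Your proof is correct and follows essentially the same approach as the paper: for the Frobenius-power identity both arguments reduce to the observation that $p^e\Exp I+\sigma^\vee=\bigcup(p^eu+\sigma^\vee)$ and then intersect with $\Z^d$, and for the chain of inclusions both use the floor-of-scaled-barycentric-coefficients trick, noting that taking floors loses at most $n$ in total so that $\sum_i\lfloor(m+n)\lambda_i\rfloor\geq m$. The only cosmetic difference is that you make the integrality of the residual vector explicit, whereas the paper leaves it implicit in the line $v\in(b_1u_1+\cdots+b_nu_n+\sigma^\vee)\cap\Z^d$.
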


\begin{proof} Let $v\in (m+n) \Hull I\cap \Z^d$.  Then there exist $a_1,\ldots, a_n\in \R_{\geq 0}$ such that $a_1+\cdots+a_n=1$ and $v\in (m+n)(a_1u_1+\cdots+a_nu_n+{\sigma^{\!\vee}})\cap \Z^d$.  For each $1\leq i\leq n$, let $b_i=\lfloor (m+n)a_i\rfloor$.  Since each $u_i\in {\sigma^{\!\vee}}$, we have that
\[v\in (m+n)(a_1u_1+\cdots +a_nu_n+{\sigma^{\!\vee}})\cap \Z^d\subseteq (b_1u_1+\cdots+b_nu_n+{\sigma^{\!\vee}})\cap \Z^d.\]
Since $b_1+\cdots +b_n\geq (m+n)(a_1+\cdots +a_n)-n=m$, we have that $v\in \Exp I^m$.  This shows the first inclusion in the first statement.

A monomial $x^v$ is in $I^m$ if and only if $v\in (a_1u_1+\cdots+a_nu_n+{\sigma^{\!\vee}})\cap \Z^d$ for some $a_1,\ldots, a_n\in \N$ with $a_1+\cdots +a_n= m$.  If this is the case then 
\[v\in \left(m\left(\frac{a_1}{m}u_1+\cdots+\frac{a_n}{m}u_n\right)+{\sigma^{\!\vee}}\right)\cap \Z^d\subseteq  m\Hull I\cap \Z^d.\]
This shows the second inclusion in the first statement.

A monomial $x^v$ is in $I^{[p^e]}$ if and only if $v\in p^eu+{\sigma^{\!\vee}}$ for some $u\in\Exp I$.  That is, $x^v\in I^{[p^e]}$ if and only if
\[v\in \bigcup_{u\in\Exp I}(p^eu+{\sigma^{\!\vee}})\cap \Z^d=\left(p^e\bigcup_{u\in \Exp I}(u+{\sigma^{\!\vee}})\right)\cap \Z^d=(p^e\Exp I+\sigma^\vee)\cap\Z^d,\]
which proves the second statement.
\end{proof}

\begin{theorem}\label{rmult toric ring} Let $(R,\m)=(k[[S]],(S))$ be a normal toric ring of dimension $d$ over a field $k$ of characteristic $p>0$, where $S={\sigma^{\!\vee}}\cap \Z^d$, and let $I$ and $J$ be $\m$-primary monomial ideals of $R$.  Then 
\[\rmu{s}{I}{J}=\vol\left({\sigma^{\!\vee}} \setminus \left(s \Hull I \cup (\Exp J+\sigma^{\! \vee}\right)\right)\]
where $\vol(-)$ is the standard Euclidean volume in $\R^d$.
\end{theorem}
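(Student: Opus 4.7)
The plan is to convert the length on the left-hand side into a lattice-point count in $\R^d$, apply Lemma \ref{asymptotic Exp} to sandwich this count between two simple geometric quantities, and then invoke a standard Riemann-sum estimate to identify the limit with the stated volume.

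Since $K_e := I^{\lceil sp^e\rceil}+J^{[p^e]}$ is a monomial ideal, the classes of the monomials $x^v$ with $v\in S\setminus \Exp K_e$ form a $k$-basis of $R/K_e$, so $\length{R/K_e}=|S\setminus\Exp K_e|$, and $\Exp K_e = \Exp I^{\lceil sp^e\rceil}\cup\Exp J^{[p^e]}$. Applying Lemma \ref{asymptotic Exp} with $m=\lceil sp^e\rceil$ yields
\[
(\lceil sp^e\rceil + n)\Hull I\cap \Z^d \;\subseteq\; \Exp I^{\lceil sp^e\rceil}\;\subseteq\; \lceil sp^e\rceil\Hull I\cap \Z^d
\]
(the containments flip relative to the indices because, as $m$ grows, $m\Hull I$ contracts, moving further into $\sigma^{\!\vee}$), together with the exact identity $\Exp J^{[p^e]}=(p^e\Exp J+\sigma^{\!\vee})\cap\Z^d$. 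Combining gives $L_e\leq \length{R/K_e}\leq U_e$, where $L_e$ and $U_e$ count the lattice points in ${\sigma^{\!\vee}}\cap\Z^d$ after removing $\lceil sp^e\rceil\Hull I\cup (p^e\Exp J+\sigma^{\!\vee})$ and $(\lceil sp^e\rceil+n)\Hull I\cup (p^e\Exp J+\sigma^{\!\vee})$ respectively.

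Next I rescale by $v\mapsto v/p^e$. Using the cone identity $\tfrac{1}{p^e}\sigma^{\!\vee}=\sigma^{\!\vee}$, this turns $L_e$ and $U_e$ into counts of $\tfrac{1}{p^e}\Z^d$-points in the bounded polyhedral regions
\[
T_e^\pm \;:=\; {\sigma^{\!\vee}}\setminus\left(\tfrac{\alpha_e^\pm}{p^e}\Hull I \,\cup\, (\Exp J+\sigma^{\!\vee})\right),
\]
where $\alpha_e^-=\lceil sp^e\rceil$ and $\alpha_e^+=\lceil sp^e\rceil+n$; both satisfy $\alpha_e^\pm/p^e\to s$. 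Boundedness of $T_e^\pm$ uses that $J$ is $\m$-primary, so $\m^N\subseteq J$ for some $N$, forcing $\Exp J+\sigma^{\!\vee}$ to exhaust $\sigma^{\!\vee}$ outside a bounded box. For any bounded polyhedron $T\subseteq\R^d$ the standard estimate $|T\cap \tfrac{1}{p^e}\Z^d|/p^{ed}\to \vol(T)$ holds, so $L_e/p^{ed}$ and $U_e/p^{ed}$ both converge to $\vol(T_\infty)$ with $T_\infty := {\sigma^{\!\vee}}\setminus(s\Hull I\cup (\Exp J+\sigma^{\!\vee}))$; the sandwich then gives $\rmu{s}{I}{J}=\vol(T_\infty)$, as desired.

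The main technical point is the Riemann-sum step: because the bounding region $T_e^\pm$ depends on $e$, one needs a uniform lattice-point estimate that accommodates small deformations of the defining hyperplanes. Concretely, $T_e^\pm\triangle T_\infty$ is contained in a thin slab around the finitely many affine hyperplanes bounding $s\Hull I$, of width $O(1/p^e)$ and bounded base, so it contains only $O(p^{e(d-1)})$ points of $\tfrac{1}{p^e}\Z^d$, which is negligible after dividing by $p^{ed}$. This discrepancy estimate is standard for polyhedra, and can alternatively be packaged using the general framework of \cite{HernandezJeffries-LimitsPrimeCharacteristic}.
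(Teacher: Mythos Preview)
Your proposal is correct and follows essentially the same approach as the paper: express the length as a lattice-point count in $S$, sandwich $\Exp I^{\lceil sp^e\rceil}$ between $(\lceil sp^e\rceil+n)\Hull I$ and $\lceil sp^e\rceil\Hull I$ via Lemma~\ref{asymptotic Exp}, rescale by $1/p^e$, and pass to the volume by a Riemann-sum argument. You are somewhat more explicit than the paper about the $e$-dependence of the bounding regions $T_e^\pm$ and the symmetric-difference estimate needed to handle it, whereas the paper absorbs this into the single observation that the volume of $T_\infty$ equals that of its interior; but the underlying argument is the same.
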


\begin{proof} Let $e\in \N$.  The length of $R/(\ii{s}{p^e})$ is precisely the size of the set 
\[V_e=\set{v\in S}{x^v\notin \ii{s}{p^e}}=\set{v\in S}{v\notin \Exp I^{\lceil sp^e\rceil}\cup \Exp J^{[p^e]}}.\]
From Lemma \ref{asymptotic Exp}, we have that 
\begin{align*} \left({\sigma^{\!\vee}}\setminus \left(sp^e \Hull I\cup p^e\Exp J+{\sigma^{\!\vee}}\right)\right)\cap \Z^d  \subseteq V_e \subseteq \left({\sigma^{\!\vee}} \setminus \left((sp^e+n) \Hull I\cup p^e\Exp J+{\sigma^{\!\vee}}\right)\right) \cap \Z^d.\end{align*}
Scaling every set by $\frac{1}{p^e}$ in each dimension, we get that
\begin{align*}\left({\sigma^{\!\vee}}\setminus \left(s\Hull I\cup \Exp J+{\sigma^{\!\vee}}\right)\right)\cap \left(\!\frac{1}{p^e}\Z\!\right)^d&\subseteq \frac{1}{p^e}V_e\subseteq \left({\sigma^{\!\vee}} \setminus \left((s+n/p^e) \Hull I\cup \Exp J + {\sigma^{\!\vee}}\right)\right) \cap \left(\!\frac{1}{p^e}\Z\!\right)^d.\end{align*}
Since the volume of ${\sigma^{\!\vee}}\setminus \left(s\Hull I\cup \Exp J+{\sigma^{\!\vee}}\right)$ is equal to the volume of its interior, we obtain that
\begin{align*}
& \vol\left({\sigma^{\!\vee}}\setminus \left(s\Hull I\cup \Exp J+{\sigma^{\!\vee}}\right)\right)\\
=&\lim_{e\to\infty}\frac{1}{p^{ed}} \left|\left({\sigma^{\!\vee}}\setminus \left(s\Hull I\cup \Exp J+{\sigma^{\!\vee}}\right)\right)\cap \left(\frac{1}{p^e}\Z\right)^d\right|\\
\leq&  \lim_{e\to\infty}\frac{1}{p^{ed}}|V_e|\\
\leq &\lim_{e\to\infty}\frac{1}{p^{ed}} \left|\left({\sigma^{\!\vee}}\setminus \left((s+n/p^e)\Hull I\cup \Exp J+{\sigma^{\!\vee}}\right)\right)\cap \left(\frac{1}{p^e}\Z\right)^d\right|\\
=&\vol\left({\sigma^{\!\vee}}\setminus \left(s\Hull I\cup \Exp J+{\sigma^{\!\vee}}\right)\right)
\end{align*}
And so we have equality throughout.  Since $\displaystyle{\rmu{s}{I}{J}=\lim_{e\to\infty}\frac{1}{p^{ed}}|V_e|}$, the theorem is proved.
\end{proof}

Theorem \ref{rmult toric ring} allows us to calculate the $s$-multiplicity of toric rings.  We compute two examples.

\begin{example}[$A_n$ Singularities] Let $n\in \N$, $n\geq 1$, and take  
\[A_n=k[[x_1,x_2,x_3]]/(x_1x_2-x_3^{n+1})\cong k[[x,y,x^{-1}y^{n+1}]].\]
The geometry of this toric ring is illustrated below.  The shaded region corresponds to the cone ${\sigma^{\!\vee}}$, and the lattice points $(1,0)$, $(0,1)$, and $(-1,n+1)$ correspond to $x$, $y$, and $x^{-1}y^{n+1}$, respectively.

\begin{center}
\begin{tikzpicture}[scale=1]
\path[fill=lightgray](0,0)--(-1.33,4)--(2,4)--(2,0);
\foreach \x in {-1,0,...,2}{
\foreach \y in {0,1,...,4}{
  \node[draw,circle,fill,inner sep=2] at (\x,\y){};
 }
}
\draw (0,0)--(-1.33,4);
\draw (0,0)--(2,0);
\node [above] at (1,0) {$(1,0)$};
\node [above] at (0,1) {$(0,1)$};
\node [left] at (-1,3) {$(-1,n+1)$};
\end{tikzpicture}
\end{center}
We wish to calculate $\smuu{s}{A_n}$, so we need to calculate $\Hull \m$ and $\Exp \m + \sigma^{\!\vee}$ where $\m=(x,y,x^{-1}y^{n+1})$.  These are illustrated below.
\begin{center}
\begin{align*}
\begin{tikzpicture}[scale=1]
\path[fill=lightgray](1,0)--(0,1)--(-1,3)--(-1.33,4)--(2,4)--(2,0);
\foreach \x in {-1,0,...,2}{
\foreach \y in {0,1,...,4}{
  \node[draw,circle,fill,inner sep=2] at (\x,\y){};
 }
}
\draw (2,0)--(0,0)--(-1.33,4);
\draw (1,0)--(0,1)--(-1,3);
\node [above] at (1.2,0) {$(1,0)$};
\node [above right] at (-0.2,1) {$(0,1)$};
\node [left] at (-1,3) {$(-1,n+1)$};
\node [below] at (0,-0.5) {$\Hull \m$};
\end{tikzpicture}
&\qquad\qquad
\begin{tikzpicture}[scale=1]
\path[fill=lightgray](1,0)--(0.67,1)--(0,1)--(-0.67,3)--(-1,3)--(-1.33,4)--(2,4)--(2,0);
\foreach \x in {-1,0,...,2}{
\foreach \y in {0,1,...,4}{
  \node[draw,circle,fill,inner sep=2] at (\x,\y){};
 }
}
\draw (2,0)--(0,0)--(-1.33,4);
\draw (1,0)--(0.67,1)--(0,1)--(-0.67,3)--(-1,3);
\node [above right] at (1,0) {$(1,0)$};
\node [above right] at (-0.2,1) {$(0,1)$};
\node [left] at (-1,3) {$(-1,n+1)$};
\node [draw,circle,fill=red,inner sep=1.5] at (0.67,1) {};
\node [above right,color=red] at (0.67,1) {$\left(\frac{n}{n+1},1\right)$};
\node [draw,circle,fill=red,inner sep=1.5] at (-0.67,3) {};
\node [above right,color=red] at (-0.67,3) {$\left(-\frac{n}{n+1},n+1\right)$};
\node [below] at (0,-0.5) {$\Exp \m + \sigma^{\!\vee}$};
\end{tikzpicture}
\end{align*}
\end{center}

There are three situations to consider: $s\leq 1$, $1\leq s\leq2-\frac{1}{n+1}$, and $s\geq 2-\frac{1}{n+1}$.  When $s\leq 1$, $s \Hull \m\cup\Exp \m + \sigma^{\!\vee}$ is illustrated below:
\begin{center}
\begin{tikzpicture}[scale=1]
\path[fill=lightgray](0.67,0)--(0,0.67)--(-0.67,2)--(-1.33,4)--(2,4)--(2,0);
\foreach \x in {-1,0,...,2}{
\foreach \y in {0,1,...,4}{
  \node[draw,circle,fill,inner sep=2] at (\x,\y){};
 }
}
\draw (0,0)--(-1.33,4);
\draw (0,0)--(2,0);
\draw (0.67,0)--(0,0.67);
\draw (0,0.67)--(-0.67,2);
\draw (1,0)--(0.67,1)--(0,1)--(-0.67,3)--(-1,3);
\node [above right] at (1,0) {$(1,0)$};
\node [above right] at (-0.2,1) {$(0,1)$};
\node [left] at (-1,3) {$(-1,n+1)$};
\node [draw,circle,fill=red,inner sep=1.5] at (0.67,0) {};
\node [below,color=red] at (0.67,0) {$(s,0)$};
\node [draw,circle,fill=red,inner sep=1.5] at (0,0.67) {};
\node [right,color=red] at (0,0.67) {$(0,s)$};
\node [draw,circle,fill=red,inner sep=1.5] at (-0.67,2) {};
\node [below left,color=red] at (-0.67,2) {$(-s,(n+1)s)$};
\end{tikzpicture}
\end{center}
From this we can compute $\rmuu{s}{A_n} = s^2$ for $s\leq 1$.

Now suppose that $1\leq s\leq 2-\frac{1}{n+1}$.  The picture now becomes
\begin{center}
\begin{tikzpicture}[scale=1]
\path[fill=lightgray](1,0)--(0.75,0.75)--(0.5,1)--(0,1)--(-0.5,2.5)--(-0.75,3)--(-1,3)--(-1.33,4)--(2,4)--(2,0);
\foreach \x in {-1,0,...,2}{
\foreach \y in {0,1,...,4}{
  \node[draw,circle,fill,inner sep=2] at (\x,\y){};
 }
}
\draw (2,0)--(0,0)--(-1.33,4);
\draw (1,0)--(0.67,1)--(0,1)--(-0.67,3)--(-1,3);
\draw (1.5,0)--(0,1.5)--(-1.25,4);
\node [below] at (1,0) {$(1,0)$};
\node [below left] at (0,1) {$(0,1)$};
\node [left] at (-1,3) {$(-1,n+1)$};
\node [draw,circle,fill=red,inner sep=1.5] at (0.75,0.75) {};
\node [right,color=red] at (0.8,0.7) {$\left(\frac{n-s+1}{n},\frac{(s-1)(n+1)}{n}\right)$};
\node [draw,circle,fill=red,inner sep=1.5] at (0.5,1) {};
\node [above right,color=red] at (0.5,1) {$(s-1,1)$};
\node [draw,circle,fill=red,inner sep=1.5] at (-0.5,2.5) {};
\node [right,color=red] at (-0.5,2.5) {$(1-s,s(n+1)-n)$};
\node [draw,circle,fill=red,inner sep=1.5] at (-0.75,3) {};
\node [above right,color=red] at (-0.75,3) {$\left(\frac{s-n-1}{n},n+1\right)$};
\end{tikzpicture}
\end{center}
Calculating the area of the unshaded region in ${\sigma^{\!\vee}}$ gives 
\[\rmuu{s}{A_n}=
-\frac{n+1}{n}(s-1)^2+2(s-1)+1\]
when $1\leq  s\leq 2-\frac{1}{n+1}$.

Now consider the case when $s\geq 2-\frac{1}{n+1}$.  In this case the picture becomes
\begin{center}
\begin{tikzpicture}[scale=1]
\path[fill=lightgray](1,0)--(0.67,1)--(0,1)--(-0.67,3)--(-1,3)--(-1.33,4)--(2,4)--(2,0);
\foreach \x in {-1,0,...,2}{
\foreach \y in {0,1,...,4}{
  \node[draw,circle,fill,inner sep=2] at (\x,\y){};
 }
}
\draw (2,0)--(0,0)--(-1.33,4);
\draw (1,0)--(0.67,1)--(0,1)--(-0.67,3)--(-1,3);
\draw (2,0)--(0,2)--(-1,4);
\node [above right] at (1,0) {$(1,0)$};
\node [above right] at (-0.2,1) {$(0,1)$};
\node [left] at (-1,3) {$(-1,n+1)$};
\node [draw,circle,fill=red,inner sep=1.5] at (0.67,1) {};
\node [above right,color=red] at (0.67,1) {$\left(\frac{n}{n+1},1\right)$};
\node [draw,circle,fill=red,inner sep=1.5] at (-0.67,3) {};
\node [above right,color=red] at (-0.67,3) {$\left(-\frac{n}{n+1},n+1\right)$};
\end{tikzpicture}
\end{center}
And so we compute $\rmuu{s}{A_n}=2-\frac{1}{n+1}$ when $s\geq 2-\frac{1}{n+1}$.

With this, we can write down the $s$-multiplicity for the $A_n$ singularities:
\[\smuu{s}{A_n}=\begin{cases}
2 & \text{if } 0 < s< 1\\
\frac{-\frac{n+1}{n}(s-1)^2+2(s-1)+1}{\frac{1}{2}s^2-(s-1)^2} & \text{if } 1\leq  s < 2-\frac{1}{n+1}\\
\frac{2-\frac{1}{n+1}}{\frac{1}{2}s^2-(s-1)^2} & \text{if } 2-\frac{1}{n+1}\leq s < 2\\
2-\frac{1}{n+1} & \text{if } s \geq 2.
\end{cases}\]
 \end{example}
 
 \begin{example} Let $k$ be a field, and consider the $n$th 2-dimensional Veronese subring $V_n=k[[x,xy,\ldots,xy^{n}]]$.  The geometry of this ring is illustrated below; the shaded region corresponds to $\sigma^{\!\vee}$ and for $0\leq a\leq n$, the lattice points $(1,a)$ corresponds to the monomial $xy^a$.

 \begin{center}
\begin{tikzpicture}[scale=1]
\path[fill=lightgray](2,0)--(0,0)--(2,4);
\foreach \x in {0,...,2}{
\foreach \y in {0,1,...,4}{
  \node[draw,circle,fill,inner sep=2] at (\x,\y){};
 }
}
\draw (2,0)--(0,0)--(2,4);
\node[above right] at (1,0) {$(1,0)$};
\node[above right] at (1,2) {$(1,n)$};
\end{tikzpicture}
\end{center}
 
  Letting $\m=(x,xy,\ldots,xy^n)$ we have the following pictures for $\Hull \m$ and $\Exp \m + \sigma^{\!\vee}$.
 
 \begin{center}
\begin{align*}
\begin{tikzpicture}[scale=1]
\path[fill=lightgray](2,0)--(1,0)--(1,2)--(2,4);
\foreach \x in {0,...,2}{
\foreach \y in {0,1,...,4}{
  \node[draw,circle,fill,inner sep=2] at (\x,\y){};
 }
}
\draw (2,0)--(0,0)--(2,4);
\draw (1,0)--(1,2);
\node [below] at (1,-0.5) {$\Hull \m$};
\end{tikzpicture}
&\qquad\qquad
\begin{tikzpicture}[scale=1]
\path[fill=lightgray](2,0)--(1,0)--(1.5,1)--(1,1)--(1.5,2)--(1,2)--(2,4);
\foreach \x in {0,...,2}{
\foreach \y in {0,1,...,4}{
  \node[draw,circle,fill,inner sep=2] at (\x,\y){};
 }
}
\draw (2,0)--(0,0)--(2,4);
\draw (1,0)--(1.5,1)--(1,1)--(1.5,2)--(1,2);
\node [below] at (1,-0.5) {$\Exp \m + \sigma^{\!\vee}$};
\end{tikzpicture}
\end{align*}
\end{center}
 
 Thus we have the following pictures for various values of $s$:
 \begin{center}
 \begin{tikzpicture}[scale=1]
\path[fill=lightgray](2,0)--(0.5,0)--(0.5,1)--(2,4);
\foreach \x in {0,...,2}{
\foreach \y in {0,1,...,4}{
  \node[draw,circle,fill,inner sep=2] at (\x,\y){};
 }
}
\draw (2,0)--(0,0)--(2,4);
\draw (0.5,0)--(0.5,1);
\draw (1,0)--(1.5,1)--(1,1)--(1.5,2)--(1,2);
\node[below,color=red] at (0.5,0) {$(s,0)$};
\node[draw,circle,fill=red,inner sep=1.5] at (0.5,0) {};
\node[above left,color=red] at (0.5,1) {$(s,ns)$};
\node[draw,circle,fill=red,inner sep=1.5] at (0.5,1) {};
\node[below] at (1,-0.5) {$0\leq s\leq 1$};
\end{tikzpicture}
 \qquad\qquad
 \begin{tikzpicture}[scale=1]
\path[fill=lightgray](2,0)--(1,0)--(1.25,0.5)--(1.25,1)--(1,1)--(1.25,1.5)--(1.25,2)--(1,2)--(2,4);
\foreach \x in {0,...,2}{
\foreach \y in {0,1,...,4}{
  \node[draw,circle,fill,inner sep=2] at (\x,\y){};
 }
}
\draw (2,0)--(0,0)--(2,4);
\draw (1,0)--(1.5,1)--(1,1)--(1.5,2)--(1,2);
\draw (1.25,0)--(1.25,2.5);
\node[right,color=red] at (1.25,0.5) {$(s,n(s-1))$};
\node[draw,circle,fill=red,inner sep=1.5] at (1.25,0.5) {};
\node[above right,color=red] at (1.25,1) {$(s,1)$};
\node[draw,circle,fill=red,inner sep=1.5] at (1.25,1) {};
\node [below] at (1,-0.5) {$1\leq s\leq 1+1/n$};
\end{tikzpicture}
 \quad\quad
 \begin{tikzpicture}[scale=1]
\path[fill=lightgray](2,0)--(1,0)--(1.5,1)--(1,1)--(1.5,2)--(1,2)--(2,4);
\foreach \x in {0,...,2}{
\foreach \y in {0,1,...,4}{
  \node[draw,circle,fill,inner sep=2] at (\x,\y){};
 }
}
\draw (2,0)--(0,0)--(2,4);
\draw (1,0)--(1.5,1)--(1,1)--(1.5,2)--(1,2);
\draw (1.75,0)--(1.75,3.5);
\node[below right,color=red] at (1.5,1) {$(1+1/n,1)$};
\node[draw,circle,fill=red,inner sep=1.5] at (1.5,1) {};
\node [below] at (1,-0.5) {$s\geq 1+1/n$};
\end{tikzpicture}
  \end{center}
  With these figures we can caluclate $\rmuu{s}{V_n}$ and $\smuu{s}{V_n}$:
  \[\rmuu{s}{V_n} =\begin{cases} \frac{ns^2}{2} & \text{if } 0 < s\leq 1 \\ -\frac{n^2}{2}(s-1)^2+n(s-1)+\frac{n}{2} & \text{if } 1\leq s\leq 1+1/n \\ \frac{n+1}{2} & \text{if } s\geq 1+1/n\end{cases}\]
  \[\smuu{s}{V_n} =\begin{cases} n & \text{if } 0 < s < 1 \\ \frac{-\frac{n^2}{2}(s-1)^2+n(s-1)+\frac{n}{2}}{\frac{1}{2}s^2-(s-1)^2}& \text{if } 1\leq s < 1+1/n \\ \frac{n+1}{\frac{1}{2}s^2-(s-1)^2} & \text{if } 1+1/n\leq s < 2 \\  \frac{n+1}{2} & \text{if } s\geq 2.\end{cases}\]
 \end{example}

 \begin{example}
 The normalizing factors $\nfact{d}{s}$ can be easily visualized as areas in space in the same manner.  Indeed, since $k[[x_1,\ldots,x_d]]$ is a toric ring, we simply apply the construction above to calculate $\rmuu{s}{(x_1,\ldots,x_d)}$.  For instance, when $d=2$, we have the following picture: 
 \begin{center}

\begin{tikzpicture}[scale=1]
\path[fill=lightgray](2,0)--(0.5,0)--(0,0.5)--(0,2)--(2,2);
\foreach \x in {0,...,2}{
\foreach \y in {0,...,2}{
  \node[draw,circle,fill,inner sep=2] at (\x,\y){};
 }
}
\draw (2,0)--(0,0)--(0,2);
\draw (0.5,0)--(0,0.5);
\node [draw,circle,fill=red,inner sep=1.5] at (0,0.5) {};
\node [left,color=red] at (0,0.5) {$(0,s)$};
\node [draw,circle,fill=red,inner sep=1.5] at (0.5,0) {};
\node [below,color=red] at (0.5,0) {$(s,0)$};
\node [below] at (1,-0.5) {$0< s\leq 1$};
\node [below] at (1,-1) {$\nfact{2}{s}=\frac{1}{2}s^2$};
\end{tikzpicture}
\qquad
\begin{tikzpicture}[scale=1]
\path[fill=lightgray](2,0)--(1,0)--(1,0.5)--(0.5,1)--(0,1)--(0,2)--(2,2);
\foreach \x in {0,...,2}{
\foreach \y in {0,...,2}{
  \node[draw,circle,fill,inner sep=2] at (\x,\y){};
 }
}
\draw (2,0)--(0,0)--(0,2);
\draw (1,0)--(1,1)--(0,1);
\draw (1.5,0)--(0,1.5);
\node [draw,circle,fill=red,inner sep=1.5] at (0.5,1) {};
\node [above right,color=red] at (0.5,1) {$(s-1,1)$};
\node [draw,circle,fill=red,inner sep=1.5] at (1,0.5) {};
\node [right,color=red] at (1,0.5) {$(1,s-1)$};
\node [below] at (1,-0.5) {$1\leq s\leq 2$};
\node [below] at (1,-1) {$\nfact{2}{s}=\frac{1}{2}s^2-(s-1)^2$};
\end{tikzpicture}
\qquad
\begin{tikzpicture}[scale=1]
\path[fill=lightgray](2,0)--(1,0)--(1,1)--(0,1)--(0,2)--(2,2);
\foreach \x in {0,...,2}{
\foreach \y in {0,...,2}{
  \node[draw,circle,fill,inner sep=2] at (\x,\y){};
 }
}
\draw (2,0)--(0,0)--(0,2);
\draw (1,0)--(1,1)--(0,1);
\node [below] at (1,-0.5) {$s\geq 2$};
\node [below] at (1,-1) {$\nfact{2}{s}=1$};
\end{tikzpicture}
\end{center}
\end{example}
 \newpage
 \bibliographystyle{alpha}
 \bibliography{TaylorBibliography}
\end{document}